\newcommand*{\fplus}{\genfrac{}{}{0pt}{}{}{+}}
\newcommand*{\fdots}{\genfrac{}{}{0pt}{}{}{\cdots}}
\newcommand*{\fminus}{\genfrac{}{}{0pt}{}{}{-}}
\newcommand{\qrfac}[2]{{\left({#1}; q\right)_{#2}}} 
\renewcommand{\Im}{\operatorname{Im}}
\newcommand{\qbin}{\genfrac{[}{]}{0pt}{}}
\newcommand{\smallo}{o}
\newcommand{\elliptictheta}[1]{\theta\!\left({#1} ;q \right) }
\newtheorem{Theorem}{Theorem}[section]
\newtheorem{Proposition}[Theorem]{Proposition}
\newtheorem*{rems}{Remarks} 
\newenvironment{Remarks}{\begin{rems}\normalfont}{\end{rems}}
\newtheorem*{rem}{Remark} 
\newenvironment{Remark}{\begin{rem}\normalfont}{\end{rem}}
\numberwithin{equation}{section}
\begin{document} 
\title[Orthogonal polynomials from continued fractions]{Orthogonal polynomials associated with a continued fraction of Hirschhorn}

\author[G.~Bhatnagar]{Gaurav Bhatnagar
}
\address{Department of Mathematics, Ashoka University, Sonipat, Haryana 131029, India
}
\email{bhatnagarg@gmail.com}

\author[M.~E.~H.~Ismail]{Mourad~E.~H.~Ismail}
\address{Department of Mathematics, University of Central Florida, Orlando, FL 32816, USA  }
\email{ismail@math.ucf.edu}

\dedicatory{Dedicated to the memory of Dick Askey}

\date{\today}

\begin{abstract}
We study orthogonal polynomials associated with a continued fraction due to Hirschhorn. 
Hirschhorn's continued fraction contains as special cases 
the famous Rogers--Ramanujan continued fraction and two of Ramanujan's generalizations. 
The orthogonality measure of the set of 
polynomials obtained has an absolutely continuous component. We find generating functions, asymptotic formulas, orthogonality relations, and the Stieltjes transform of the measure. Using standard generating function techniques, we show how to obtain formulas for the convergents of Ramanujan's continued fractions, including a formula that Ramanujan recorded himself as Entry 16 in Chapter 16 of his second notebook.
\end{abstract}

\keywords{Continued fractions, Orthogonal polynomials, Rogers-Ramanujan Continued Fraction,  the Lost Notebook}
\subjclass[2010]{Primary 33D45; Secondary 30B70}

\maketitle

\section{Introduction}



The connection of continued fractions with  orthogonal polynomials is well known. Indeed,  orthogonal polynomials made an appearance in the context of continued fractions as early as 1894, in the work of Stieltjes \cite{TJS1894, TJS1895}.  
Our objective in this paper is to 
study orthogonal polynomials associated to
a continued fraction due to Hirschhorn \cite{MDH1974} (also considered by Bhargava and Adiga \cite{BA1984}). 
This continued fraction is
\begin{equation}\label{mikecf} 
\frac{1}{1-b+a}\fplus\frac{b+\lambda q}{1-b+aq}\fplus\frac{b+\lambda q^2}{1-b+aq^2}\fplus\frac{b+\lambda q^3}{1-b+aq^3}\fplus\fdots,
\end{equation}
where we have changed a few symbols in order to fit the notation used by 
Andrews and Berndt~\cite{AB2005} in their edited version of Ramanujan's Lost Notebook. 
Hirschhorn's continued fraction contains three of Ramanujan's famous continued fractions. 
For example, when one of $a$ or $b$ is $0$, it reduces to continued fractions in the Lost Notebook (see
 \cite{AB2005} and \cite[Ch.\ 16]{Berndt1991-RN3}); when both $a$ and $b$ are $0$ it is the Rogers--Ramanujan continued fraction. 

%

Some of the orthogonal polynomials arising from these special cases have been studied before. A set of orthogonal polynomials corresponding to the $b=0$ case have been studied previously by Al-Salam and Ismail \cite{Al-I1983}, and we study one more. The orthogonality measure of the polynomials associated to Hirschhorn's continued fraction studied here has an absolutely continuous component, as opposed to the discrete measure in \cite{Al-I1983}. 

The techniques we use were developed by Askey and Ismail in their memoir \cite{AI1984}.
 These authors study classical orthogonal polynomials using techniques involving recurrence relations, generating functions and asymptotic methods. They also used a theorem of Nevai \cite{PN1979}.  In addition, we apply a moment method 
 developed by Ismail and Stanton~\cite{IS2002}. In the context of Ramanujan's continued fractions, these ideas have been applied previously by Al--Salam and Ismail \cite{Al-I1983} and Ismail and Stanton \cite{IS2006}. 

The contents of this paper are as follows. In Section \ref{sec:mikecf1} we provide some  
background information from the theory of orthogonal polynomials. 
In Section \ref{sec:Mike-Nevai}, we translate 
Hirschhorn's continued fraction to a form suitable for our study. Further, we apply Nevai's theorem to compute a formula for the absolutely continuous component of the measure of the orthogonal polynomials associated with Hirschhorn's continued fraction. 
In Section \ref{sec:stieltjes}, we obtain another expression for this by inverting the associated Stieltjes transform. 
In Section~\ref{sec:moments}, we provide another solution of the recurrence relation consisting of 
functions that are moments over a discrete measure.  
In Section~\ref{sec:specialcases}, we consider a continued fraction of Ramanujan
obtained by taking $b=0$ in Hirschhorn's continued fraction, where we obtain a discrete 
orthogonality measure. 

Finally, in Section~\ref{sec:convergents}, we show how to obtain formulas for the convergents of a continued fraction. Such a formula was given by Ramanujan himself, who gave a formula for the convergents of 
\begin{equation}
\frac{1}{1}\fplus\frac{\lambda q}{1}\fplus\frac{\lambda q^2}{1}\fplus\frac{\lambda q^3}{1}\fplus\fdots,
\end{equation}
the Rogers--Ramanujan continued fraction. His formula for its convergents appears as Entry 16 in Chapter 16 of Ramanujan's second notebook (see Berndt 
\cite{Berndt1991-RN3}). 

To state Ramanujan's formula, we require some notation. We need the $q$-rising factorial $\qrfac{q}{n}$, which is defined to be $1$ when $n=0$; and 
$$\qrfac{q}{n} = (1-q)(1-q^2)\cdots (1-q^{n}),$$
for $n$ a positive integer.

Next we have the $q$-binomial coefficient, defined as
$$\qbin{n}{k}_q = \frac{\qrfac{q}{n}}{\qrfac{q}{k}\qrfac{q}{n-k}}$$
where $n\geq k$ are nonnegative integers. When $n<k$ we take $\qbin{n}{k}_q=0.$

Ramanujan's formula  is as follows.  
\begin{equation}
\frac{N_n}{D_n} =
\frac{1}{1}\fplus \frac{\lambda q}{1}\fplus
\frac{\lambda q^2}{1}\fplus\frac{\lambda q^3}{1}
\fplus\fdots \fplus \frac{\lambda q^n}{1}
 ,\label{entry16}
\end{equation}
%
%
where
\begin{equation*}\label{num-entry16}
N_n = 
\sum_{k\geq 0} q^{k^2+k}\lambda^k
\qbin{n-k}{k}_{q}
\end{equation*}
and
\begin{equation*}\label{den-entry16}
D_n = 
\sum_{k\geq 0} q^{k^2}\lambda^k 
\qbin{n-k+1}{k}_{q}
.
\end{equation*}
The sums $N_n$ and $D_n$ are finite sums. For example, the summand of $N_n$ is $0$ when the index $k$ is such that $n-k<k$.

Unlike the work of Ramanujan, there will be no mystery about how such formulas are discovered. 

\section{Background: From continued fractions  to orthogonal polynomials}
\label{sec:mikecf1}
If there is a continued fraction, then there is a three-term recurrence relation. And if the recurrence relation is of the \lq right type\rq, it defines a set of orthogonal polynomials.  Such a recurrence relation is central to the study of the associated orthogonal polynomials and examining it directs our study. The objective of this section is to collate this background information from the theory of orthogonal polynomials. We have used Chihara \cite{Chihara1978} and the second author's book \cite{MI2009}. For introductory material on these topics we recommend Andrews, Askey and Roy \cite{AAR1999}.

The right type of continued fraction is called the $J$-fraction, which is of the form
\begin{equation}\label{jfrac}
\frac{A_0}{A_0x+B_0}\fminus\frac{C_1}{A_1x+B_1}\fminus\frac{C_2}{A_2x+B_2}\fminus\fdots.
\end{equation}
The $k$th convergent of the $J$-fraction is given by
$$\frac{N_k(x)}{D_k(x)} :=
\frac{A_0}{A_0x+B_0}\fminus\frac{C_1}{A_1x+B_1}\fminus\fdots \fminus \frac{C_{k-1}}{A_{k-1}x+B_{k-1}}.
$$
The following proposition shows how to compute the convergents of a continued fraction. 
\begin{Proposition}[{\cite[Th. 2.6.1, p.\ 35]{MI2009}}]\label{cf-conv} Assume that $A_kC_{k+1}\neq 0$, $k=0, 1, \dots.$ Then the polynomials $N_k(x)$ and $D_k(x)$ are solutions of the recurrence relation
\begin{equation} \label{three-term}
y_{k+1}(x) = (A_kx+B_k) y_k(x) -C_ky_{k-1}(x), \text{ for } k>0,
\end{equation}
with the initial values 
$$D_0(x)=1, D_1(x)=A_0x+B_0, N_0(x)=0, N_1(x)=A_0.$$ 
\end{Proposition}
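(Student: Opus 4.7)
The plan is to derive the claimed three-term recurrence from the standard recurrence for successive convergents of a general continued fraction, specialized to the $J$-fraction form \eqref{jfrac}. Recall that if one writes a continued fraction as $a_1/b_1 \fplus a_2/b_2 \fplus \cdots$, then the numerator $p_k$ and denominator $q_k$ of its $k$-th convergent (both regarded as polynomials in $x$) satisfy the same two-step recurrence
\[
p_k = b_k\, p_{k-1} + a_k\, p_{k-2}, \qquad q_k = b_k\, q_{k-1} + a_k\, q_{k-2},
\]
with initial data $p_{-1}=1$, $p_0=0$, $q_{-1}=0$, $q_0=1$. This is a classical identity proved by induction on $k$, matching the effect of appending one more partial quotient to the truncation.

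First I would read off the partial numerators and denominators of \eqref{jfrac}: the $k$-th partial numerator is $-C_k$ (with $-C_0$ replaced by $A_0$ at the top) and the $k$-th partial denominator is $A_k x + B_k$. Substituting these into the generic recurrence above immediately yields, for $k \geq 1$,
\[
N_{k+1}(x) = (A_k x + B_k)\, N_k(x) - C_k\, N_{k-1}(x),
\]
and the identical relation for $D_{k+1}(x)$, which is exactly \eqref{three-term}. The hypothesis $A_k C_{k+1}\neq 0$ guarantees that none of the partial quotients collapse, so the formal recurrence is genuinely of the stated three-term type and the degrees of $N_k$ and $D_k$ behave as expected.

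Next I would check the initial values by direct computation from the definition of the truncated continued fractions $N_k/D_k$. Taking $k=1$ gives $N_1/D_1 = A_0/(A_0 x + B_0)$, so $N_1(x) = A_0$ and $D_1(x) = A_0 x + B_0$. For $k=0$, interpreting the empty $J$-fraction as $0/1$ yields $N_0(x)=0$ and $D_0(x)=1$, matching the stated initial conditions. One then verifies that these initial conditions, fed into the recurrence \eqref{three-term} for $k=1$, reproduce the values of $N_2$ and $D_2$ obtained directly from the continued fraction; by induction the two sequences coincide.

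The only real subtlety—rather than a genuine obstacle—is the bookkeeping surrounding the top of the continued fraction: the first partial numerator is $A_0$ rather than $-C_0$, which is why $N_1=A_0$ instead of $N_1=-C_0$, and this has to be threaded carefully through the induction's base case. Once that is handled, the result follows with no additional work.
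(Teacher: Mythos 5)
Your proposal is correct: it is the classical Wallis--Euler (fundamental recurrence) argument for convergents, specialized to the $J$-fraction \eqref{jfrac}, and the indexing and initial values all check out. The paper itself gives no proof of this proposition --- it is quoted directly from \cite[Th.\ 2.6.1]{MI2009} --- and your argument is essentially the standard textbook proof that the citation points to, so there is nothing further to compare.
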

Instead of \eqref{three-term}, we consider a three term recurrence equation of the form
\begin{equation} \label{three-term2}
x y_{k}(x) = y_{k+1}(x) + \alpha_k y_k(x) +\beta_k y_{k-1}(x), \text{ for } k>0,
\end{equation}
where $\alpha_k$ is real for $k\geq 0$ and $\beta_k>0$ for $k>0$. 
This three-term recurrence can be obtained from \eqref{three-term} by mildly re-scaling the 
functions involved. 

Let the polynomials $\left\{ P_k(x)\right\}$ satisfy \eqref{three-term2}, with the initial values  
$$P_0(x)=1  \text{ and }P_1(x)=x-\alpha_0.$$  We will also have occasion to consider the polynomials $\left\{ P_k^*(x)\right\}$, satisfying \eqref{three-term2} with the initial conditions
$P_0^*(x)=0$ and $P_1^*(x)=1$. The $P_k^*(x)$ correspond to the numerator and $P_k(x)$ to the denominator of the associated $J$-fraction. Note that both $P_k^*(x)$ and  $P_k(x)$ are monic polynomials, of degree $k-1$ and $k$, respectively. 

The next proposition shows that the $P_k(x)$ 
are orthogonal with respect to a measure $\mu$. 
 \begin{Proposition}[Spectral Theorem {\cite[Th.\ 2.5.2]{MI2009}}]\label{spectral}
Given a sequence $\{P_n(x)\}$ as above, there is a positive measure $\mu$ such that
 $$\int_a^b P_n(x)P_m(x) d\mu(x) =  \beta_1\beta_2\dots\beta_n \cdot \delta_{mn}.$$
 \end{Proposition}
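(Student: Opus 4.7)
The plan is to establish this via the classical route to Favard's theorem, as carried out in Chihara \cite{Chihara1978} and Ismail \cite{MI2009}.

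First, I would define a linear functional $L$ on the polynomial ring by setting $L[1] = 1$ and $L[P_n] = 0$ for every $n \geq 1$, then extending by linearity. This is well-defined because $\{P_n\}_{n\ge 0}$ is a vector-space basis for the polynomials: since $P_n$ is monic of degree $n$, each polynomial has a unique expansion in terms of the $P_n$.

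Next, I would prove the identity $L[P_m P_n] = \beta_1 \beta_2 \cdots \beta_n \, \delta_{mn}$ by induction on $\max(m,n)$, taking $m \le n$ without loss of generality. Using \eqref{three-term2} to write $x P_m = P_{m+1} + \alpha_m P_m + \beta_m P_{m-1}$ and expanding both $L[x P_m P_n]$ and (symmetrically) $L[P_m \cdot x P_n]$, one peels off indices one at a time. The off-diagonal case $m<n$ vanishes by the inductive hypothesis, while the diagonal case gives $L[P_n^2] = \beta_n L[P_{n-1}^2]$, which telescopes to the stated product. Consequently, for any nonzero polynomial $p = \sum_{k=0}^N c_k P_k$,
$$L[p^2] = \sum_{k=0}^N c_k^2 \, \beta_1 \beta_2 \cdots \beta_k > 0,$$
so $L$ is positive definite --- here the hypothesis $\beta_k > 0$ is essential.

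Finally, I would invoke the Hamburger moment theorem to realize $L$ as integration against a positive Borel measure $\mu$, obtaining $L[p] = \int_a^b p(x)\,d\mu(x)$ and hence the orthogonality relation in the statement. The main obstacle is precisely this last step: passing from a positive linear functional on polynomials to an honest integral representation is the nontrivial moment-problem content of Favard's theorem. In general the resulting measure lives on all of $\mathbb{R}$ and may fail to be unique when the Hamburger problem is indeterminate; securing a compactly supported measure on a genuine interval $[a,b]$ requires extra boundedness of $(\alpha_k,\beta_k)$. Since the paper cites \cite{MI2009} for the statement, the cleanest route is to quote this classical result rather than reprove it here.
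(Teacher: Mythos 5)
The paper does not prove this proposition at all: it is quoted verbatim as background, with the citation to \cite[Th.\ 2.5.2]{MI2009} serving in place of a proof, so there is no ``paper's route'' to compare against. Your sketch is the standard proof of Favard's theorem and is correct in outline: the functional $L$ with $L[1]=1$, $L[P_n]=0$ for $n\ge 1$ is well defined because the monic $P_n$ form a basis; the orthogonality relation $L[P_mP_n]=\beta_1\cdots\beta_n\,\delta_{mn}$ does follow from the recurrence (a slightly cleaner induction is to first show $L[x^jP_n]=0$ for $j<n$ by induction on $j$, which gives the off-diagonal case at once and reduces $L[P_n^2]=L[x^nP_n]$ to the telescoping $\beta_nL[P_{n-1}^2]$ --- your symmetric expansion of $L[xP_mP_n]$ needs a little care since it introduces terms with larger $\max(m,n)$); positivity of $L$ on squares then follows exactly as you say from $\beta_k>0$. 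You also correctly isolate the one genuinely nontrivial step, the passage from a positive-definite functional to a representing measure (Helly selection applied to the Gauss quadrature measures, or equivalently solvability of the Hamburger problem), and your caveats about non-uniqueness and about needing bounded $(\alpha_k,\beta_k)$ to get a compactly supported measure on $[a,b]$ are consistent with the ``pertinent facts'' the paper lists immediately after the proposition. Since the paper itself treats this as a classical result to be cited rather than reproved, your decision to quote the moment-problem step is entirely appropriate.
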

 \subsection*{Some pertinent facts}
\begin{itemize}
\item The interval [a, b] is the convex hull of the support of $\mu$. 
\item  If $\{\alpha_k\}$ and $\{\beta_k\}$ are bounded, then the support of $\mu$ is bounded, and $[a,b]$ is a finite interval. In addition, the measure $\mu$ is unique. 
\item 
The interval [a, b] is called the interval of orthogonality and as the degree $n \to \infty$, the smallest zeros of $p_n$ converges to $a$ while the largest converges to $b$. 
\item The measure $\mu$ could possibly have both discrete and an absolutely continuous component. The orthogonality relation is then of the form
 $$\int_a^b P_n(x)P_m(x)\mu^{\prime}(x)dx + \sum_j P_n(x_j)P_m(x_j) w(x_j) = h_n\delta_{mn},$$
 where $x_j$ are the points where $\mu$ has mass $w(x_j)$, and $h_n>0$.
\item  (Blumenthal's Theorem \cite[Th.\ IV-3.5, p.~117]{Chihara1978} (rephrased)) If $\alpha_k\to \alpha$ and $\beta_k\to 0$, then the measure of the orthogonal polynomials defined by \eqref{three-term2} is purely discrete. However, if $\alpha_k\to\alpha$ and $\beta_k\to \beta>0$, then $\mu$ has an absolutely continuous component. 
\end{itemize}

Next, we have  a proposition that shows the connection between the continued fraction and the Stieltjes transform of the measure. 
 \begin{Proposition}[Markov, see {\cite[Th.\ 2.6.2]{MI2009}}]\label{prop:stieltjes} Assume that the true interval of orthogonality $[a,b]$ is bounded. Then
 $$\displaystyle
 \lim_{k\to\infty} \frac{P_k^*(x)}{P_k(x)}   =\int_a^b \frac{d\mu(t)}{x-t},$$ 
  uniformly for $x\not\in \operatorname{supp}(\mu)$.
 \end{Proposition}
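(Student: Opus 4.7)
The plan is to identify the ratio $P_k^*(x)/P_k(x)$ with a Gaussian quadrature sum associated to $\mu$ and then pass to the limit as $k\to\infty$. First I would invoke the standard consequence of Proposition~\ref{spectral} that each $P_k(x)$ has exactly $k$ simple real zeros $x_{k,1}<\dots<x_{k,k}$, all lying in the true interval of orthogonality $[a,b]$. Since $P_k^*(x)$ has degree $k-1$ while $P_k(x)$ has $k$ distinct zeros, the ratio admits a partial fraction expansion
\begin{equation*}
\frac{P_k^*(x)}{P_k(x)} = \sum_{j=1}^{k}\frac{\lambda_{k,j}}{x-x_{k,j}},
\qquad
\lambda_{k,j}=\frac{P_k^*(x_{k,j})}{P_k'(x_{k,j})}.
\end{equation*}

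The key step is to recognize the residues $\lambda_{k,j}$ as the Christoffel numbers for the Gauss quadrature rule relative to $\mu$. Using the Christoffel--Darboux identity applied to the two solutions $\{P_n\}$ and $\{P_n^*\}$ of the recurrence \eqref{three-term2}, together with the orthogonality relation from Proposition~\ref{spectral}, one checks that $\lambda_{k,j}>0$ and that the quadrature identity
\begin{equation*}
\sum_{j=1}^{k}\lambda_{k,j}\,p(x_{k,j})=\int_{a}^{b}p(t)\,d\mu(t)
\end{equation*}
holds for every polynomial $p$ of degree at most $2k-1$. In particular, choosing $p\equiv 1$ gives the uniform bound $\sum_{j}\lambda_{k,j}=\mu([a,b])$ independent of $k$.

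Fix now $x\notin\operatorname{supp}(\mu)$ and let $K$ be a compact subset of $\mathbb{C}\setminus\operatorname{supp}(\mu)$ containing $x$. The function $t\mapsto 1/(x-t)$ is continuous on $[a,b]$, uniformly in $x\in K$. By the Weierstrass approximation theorem, given $\varepsilon>0$ one can find a polynomial $p(t)$ (depending on $x$, but chosen uniformly for $x\in K$) with $\bigl|1/(x-t)-p(t)\bigr|<\varepsilon$ for all $t\in[a,b]$. Once $k$ is large enough that $2k-1\geq\deg p$, the quadrature rule applied to $p$ is exact, and the positivity of the weights together with the uniform bound $\sum_j\lambda_{k,j}=\mu([a,b])$ controls the error committed in replacing $1/(x-\cdot)$ by $p$ on both sides. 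This gives
\begin{equation*}
\lim_{k\to\infty}\frac{P_k^*(x)}{P_k(x)}=\int_{a}^{b}\frac{d\mu(t)}{x-t},
\end{equation*}
with the convergence uniform on $K$.

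The main obstacle is the middle step: verifying positivity of the residues $\lambda_{k,j}$ and exactness of the quadrature rule on polynomials of degree $\leq 2k-1$. Both require a careful book-keeping with the Christoffel--Darboux formula and the normalization $\int P_n P_m\,d\mu=\beta_1\cdots\beta_n\delta_{mn}$. Once these are in hand, the limiting argument is routine: it is essentially the classical statement that Gauss quadratures converge weakly to $\mu$, applied to the particular continuous test function $1/(x-\cdot)$.
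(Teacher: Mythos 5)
The paper does not actually prove this proposition: it is quoted as Markov's theorem with a pointer to \cite[Th.\ 2.6.2]{MI2009}, so there is no in-paper argument to compare against. Your proposal is the standard Gauss--quadrature proof of Markov's theorem, i.e.\ essentially the proof in the cited source, and its outline is sound: the partial-fraction step is immediate from the simplicity of the zeros of $P_k$ and $\deg P_k^*<\deg P_k$, and the approximation/exactness argument does give the limit. Two points deserve tightening. First, the identification $\lambda_{k,j}=P_k^*(x_{k,j})/P_k'(x_{k,j})$ with the (positive) Christoffel numbers rests on the representation $P_k^*(x)=\int_a^b \frac{P_k(x)-P_k(t)}{x-t}\,d\mu(t)$, which requires $\int d\mu=1$; this does hold here because the $m=n=0$ case of Proposition~\ref{spectral} normalizes $\mu$ to a probability measure, but it should be said. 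Second, your error estimate needs $\left|1/(x-t)-p(t)\right|<\varepsilon$ for all $t\in[a,b]$ (the quadrature nodes fill up $[a,b]$), so the uniform convergence you actually obtain is on compact subsets of $\mathbb{C}\setminus[a,b]$; points of $[a,b]\setminus\operatorname{supp}(\mu)$, which the proposition's phrasing nominally includes, would require a separate argument controlling the nodes that enter gaps of the support. With those caveats the proof is correct and is the intended one.
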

 From here, we can use Stieltjes' inversion formula (see \cite[Eq.\ (1.2.9)]{MI2009}) to obtain a formula for $d\mu$.  Let 
 $$X(x)= \int_a^b \frac{d\mu(t)}{x-t},\text{ where } x\not\in \operatorname{supp}(\mu).$$ 
   Then
  $$\mu(x_2)-\mu(x_1) = \lim_{\epsilon\to 0^+} 
  \int_{x_1}^{x_2} \frac{X(x-i\epsilon)-X(x+i\epsilon)}{2\pi i} dx.$$
 So $\mu^\prime$ exists at $x$, and we have \cite[Eq.\  (1.2.10)]{MI2009}:
 \begin{equation}\label{stieltjes-inverse}
 \mu^{\prime}(x)=  \frac{X(x-i0^+)-X(x+i0^+)}{2\pi i}. 
 \end{equation}
%

To summarize, each $J$-fraction is associated with a three-term recurrence relation. The solutions of a (possibly scaled) three-term recurrence relation, under certain conditions, are orthogonal polynomials. Both the numerator and denominator of the continued fraction satisfy the recurrence relation, with differing initial conditions. The limit of their ratio, that is the value of the continued fraction, gives a formula for the orthogonality measure of the denominator polynomials. 


\section{Hirschhorn's Continued Fraction: computing the measure}\label{sec:Mike-Nevai}

In this section, we begin our study of the orthogonal polynomials associated with  Hirschhorn's continued fraction. On examining the associated three-term recurrence relation, we find that the associated denominator polynomials have an orthogonality measure with an absolutely continuous component. 
Our goal in this section is to 
compute a formula for this, using a very useful theorem of 
Nevai~\cite{PN1979}. Nevai's theorem requires finding the asymptotic expression for the denominator polynomials, for which we will use Darboux's method.

We need some notation. The {\em $q$-rising factorial} $\qrfac{a}{n}$ is defined as
$$\qrfac{a}{n} := 
\begin{cases}
1 & \text{ for } n=0\cr
(1-a)(1-aq)\cdots (1-aq^{n-1}) & \text{ for } n=1, 2, \dots.
\end{cases}
$$ In addition
$$\qrfac{a}{\infty} := \prod_{k=0}^\infty (1-aq^{k}) \text{ for } |q|<1.$$
We use the short-hand notation
\begin{align*}
\qrfac{a_1, a_2,\dots, a_r}{k} &:= \qrfac{a_1}{k} \qrfac{a_2}{k}\cdots
\qrfac{a_r}{k}.
\end{align*}

We now begin our study of Hirschhorn's continued fraction by considering the more general $J$-fraction
\begin{equation}
H(x):= 
\frac{1-b}{x(1-b)+a}\fplus\frac{b+\lambda q}{x(1-b)+aq}\fplus 
\frac{b+\lambda q^2}{x(1-b)+aq^2}\fplus
\fdots   \label{mikecf-jfrac}
\end{equation}
Note that \eqref{mikecf} is $H(1)/(1-b)$. 
On comparing with the form of the $J$-fraction in \eqref{jfrac} we find that 
$$A_k= (1-b), B_k=aq^k \text{ for } k=0, 1, 2, \dots \text{ and }C_k = -(b+\lambda q^k) \text{ for } k=1, 2, 3, \dots.$$ The corresponding three term recurrence relation is
\begin{equation}\label{mikecf-3term}
y_{k+1}(x) = (x(1-b) + aq^k) y_k(x) + (b+\lambda q^{k}) y_{k-1}(x), \text{ for } k > 0.
\end{equation}
By Proposition~\ref{cf-conv}, the numerator and denominator polynomials (denoted by $N_n(x)$ and $D_n(x)$) satisfy \eqref{mikecf-3term} and the initial values 
$$D_0(x)=1, D_1(x)=x(1-b)+a, N_0(x)=0, N_1(x)= 1-b.$$ 
On writing \eqref{mikecf-3term} in the form \eqref{three-term2}, by replacing $x$ by $x/(1-b)$,
we note that
$\beta_k=-(b+\lambda q^k)\to -b$, so if $b<0$ the measure has an absolutely continuous 
component. 


%
%

We use a theorem of Nevai  \cite[Th.\ 40, p.\ 143]{PN1979} (see \cite[Th. 11.2.2, p.\ 294]{MI2009})
 to find the absolutely continuous component of the measure. 
\begin{Proposition}[Nevai]\label{Nevai}
Assume that the set of orthogonal polynomials $\{P_k(x)\}$ are as in Proposition~\ref{spectral}. If
\begin{equation}\label{ineq:nevai}
\sum_{k=1}^\infty \left( \left| \sqrt{\beta_k}-\frac{1}{2}\right| +|\alpha_k|\right) <\infty,
\end{equation}
then $\mu$ has an absolutely continuous component $\mu^\prime$ supported on $[-1,1]$. 
Further, if $\mu$ has a discrete part, then it will lie outside $(-1,1)$. 
In addition, the limiting relation
\begin{equation}\label{eq:nevai}
\limsup_{k\to\infty} \left( \frac{P_k(x)\sqrt{1-x^2}}{\sqrt{\beta_1\beta_2\cdots\beta_{k}}} -
\sqrt{\frac{2\sqrt{1-x^2}}{\pi\mu^{\prime}(x)}} \sin\left((k+1)\vartheta -\phi(\vartheta)\right)
\right)
=0
\end{equation}
holds, with $x=\cos\vartheta \in (-1,1)$. Here $\phi(\vartheta)$ does not depend on $k$. 
\end{Proposition}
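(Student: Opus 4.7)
The plan is to view the hypothesis $\sum(|\sqrt{\beta_k}-1/2|+|\alpha_k|)<\infty$ as saying that the recurrence \eqref{three-term2} is a summable perturbation of the free recurrence whose solutions are the (monic) Chebyshev polynomials of the second kind, for which $\alpha_k\equiv 0$ and $\beta_k\equiv 1/4$ and the orthogonality measure is $2\sqrt{1-x^2}/\pi$ on $[-1,1]$. Hence I would try to transfer everything known about $U_k$ to the perturbed problem.

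First I would pass to the orthonormal polynomials $p_k(x)=P_k(x)/\sqrt{\beta_1\cdots\beta_k}$, which satisfy the symmetric recurrence $xp_k=\sqrt{\beta_{k+1}}\,p_{k+1}+\alpha_k p_k+\sqrt{\beta_k}\,p_{k-1}$, and rewrite this in transfer-matrix form
\[
\begin{pmatrix} p_{k+1} \\ p_k \end{pmatrix}
= T_k(x)\begin{pmatrix} p_k \\ p_{k-1} \end{pmatrix}.
\]
Under the summability hypothesis, $T_k(x)\to T(x)$, where for $x=\cos\vartheta\in(-1,1)$ the limit matrix has eigenvalues $e^{\pm i\vartheta}$ on the unit circle. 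A discrete Levinson/Benzaid--Lutz-type theorem then produces two linearly independent solutions of the perturbed recurrence with asymptotics $e^{\pm ik\vartheta}(1+\smallo(1))$; taking an appropriate real linear combination gives the Jost-type representation $p_k(x)=\rho(\vartheta)\sin((k+1)\vartheta-\phi(\vartheta))+\smallo(1)$ for some amplitude $\rho(\vartheta)$ and phase $\phi(\vartheta)$ independent of $k$.

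To pin down $\rho(\vartheta)$, I would use the Christoffel--Darboux kernel $K_n(x,x)=\sum_{k=0}^n p_k(x)^2$ together with the Markov/Christoffel-function identity $\mu^{\prime}(x)=\lim_{n\to\infty}1/(\pi K_n(x,x)\cdot \text{(Jacobian)})$ on the a.c.\ spectrum. Averaging the asymptotic $p_k(x)^2\sim \rho(\vartheta)^2\sin^2((k+1)\vartheta-\phi(\vartheta))$ gives a Ces\`aro mean of $\rho(\vartheta)^2/2$, so comparing with the Christoffel-function limit identifies $\rho(\vartheta)^2=2\sqrt{1-x^2}/(\pi\mu^{\prime}(x))$. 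Translating back from $p_k$ to $P_k$ via the factor $\sqrt{\beta_1\cdots\beta_k}$ yields \eqref{eq:nevai}. The inclusion of the a.c.\ support in $[-1,1]$ and the confinement of the discrete part to its complement follow because the two asymptotic solutions above are both bounded (hence neither is $\ell^2$), so $(-1,1)$ contains no bound states, while outside $[-1,1]$ the transfer matrix has eigenvalues off the unit circle, forcing any spectrum there to be discrete.

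The main obstacle is the asymptotic step: a plain Poincar\'e--Perron argument gives only ratio asymptotics $|p_{k+1}/p_k|\to 1$ and loses the precise phase, which is exactly what the formula \eqref{eq:nevai} records. Extracting the sharp oscillatory form with an $\smallo(1)$ remainder requires the full strength of the $\ell^1$ hypothesis \eqref{ineq:nevai}, typically via a careful telescoping estimate on the product $T_{k}\cdots T_{1}$ after diagonalising the limit matrix; this is where the summability, as opposed to mere convergence, of $\sqrt{\beta_k}-1/2$ and $\alpha_k$ is essential. The rest of the proof then consists of standard spectral-theoretic bookkeeping.
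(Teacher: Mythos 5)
The paper offers no proof of this proposition: it is imported wholesale from Nevai's memoir \cite[Th.~40, p.~143]{PN1979} (see also \cite[Th.~11.2.2]{MI2009}), so there is no in-paper argument to compare yours against; what follows assesses your sketch on its own terms.

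Your skeleton is the right modern one: view \eqref{ineq:nevai} as an $\ell^1$ perturbation of the Chebyshev recurrence, pass to orthonormal polynomials and transfer matrices, apply a discrete Levinson (Benzaid--Lutz) theorem to get two solutions $e^{\pm ik\vartheta}(1+\smallo(1))$ for $x=\cos\vartheta\in(-1,1)$, and take a real combination to obtain the $\sin((k+1)\vartheta-\phi(\vartheta))$ form; the confinement of mass points to the complement of $(-1,1)$ because both solutions are bounded (hence not $\ell^2$) is also sound. Two steps are genuinely incomplete. First, pinning down the amplitude via the Christoffel-function limit $n/K_n(x,x)\to\pi\sqrt{1-x^2}\,\mu'(x)$ outsources the hardest analytic content to the M\'at\'e--Nevai--Totik theorem, which is deeper than, and historically later than, the result you are proving; in the $\ell^1$ setting the amplitude is normally fixed more directly, e.g.\ by establishing the weak-$*$ convergence of $p_n^2\,d\mu$ to the arcsine measure $\tfrac{1}{\pi}(1-x^2)^{-1/2}dx$ from the recurrence and then averaging the pointwise asymptotics against test functions to force $\tfrac12\rho(\vartheta)^2\mu'(x)=\tfrac{1}{\pi\sqrt{1-x^2}}$. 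Second, you assert but do not prove that $\mu$ actually \emph{has} an absolutely continuous component on $[-1,1]$: excluding eigenvalues from $(-1,1)$ does not rule out a singular continuous part or show $\mu'>0$ there. This requires two-sided bounds on the transfer-matrix products, uniform on compact subsets of $(-1,1)$, which the $\ell^1$ hypothesis does supply but which your telescoping remark only gestures at. With those two points filled in, your argument would be a complete and essentially standard proof of the quoted result.
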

\begin{Remark}
The interval $[-1,1]$ need not be the true interval of orthogonality. 
\end{Remark}
%
We first modify the recurrence relation \eqref{mikecf-3term} so that the hypothesis of 
Proposition~\ref{Nevai} is satisfied. 
Let
$$P_k(x):=\frac{y_k(\gamma x)}{\gamma^k(1-b)^k},$$
where $\gamma$ will be determined shortly. 
Next, divide \eqref{mikecf-3term} by $\gamma^{k+1}(1-b)^{k+1}$ to see that $P_k(x)$ satisfies the recurrence 
\begin{equation*}
xP_{k}(x)=P_{k+1}(x)-\frac{aq^k}{\gamma(1-b)} P_k(x)-\frac{b+\lambda q^k}{\gamma^2(1-b)^2}P_{k-1}(x).
\end{equation*}
Recall that $b<0$. We now choose $$\gamma^2 = -\frac{4b}{(1-b)^2}$$
to find that the recurrence reduces to
\begin{equation}\label{mike-p-3term2}
xP_{k}(x)=P_{k+1}(x)+cq^k P_k(x)+\frac1{4} \left(1+\lambda q^k/b\right) P_{k-1}(x),
\end{equation}
with $c=-a/2{\sqrt{-b}}$.  

Motivated by the above considerations, we consider the polynomials defined by \eqref{mike-p-3term2} with the initial conditions $P_0(x)=1$ and $P_1(x)=x-c$, so that $P_k(x)$ satisfy the three-term recurrence \eqref{three-term2} with $\alpha_k=cq^k$, $\beta_k=\left(1+\lambda q^k/b\right)/4$. 

A short calculation shows that the conditions for Proposition~\ref{Nevai} are satisfied.
 Assume that $0<|q|<1$. Then for $k$ large enough, we can see using the mean value theorem that
\begin{align*}
 \left| \sqrt{\beta_k}-\frac{1}{2} \right| +|\alpha_k | 
 & = 
 \left| \frac{1}{2}\left(1+\lambda q^k/b\right)^{1/2}-\frac{1}{2} \right| +|cq^k | \cr
&\leq  C \left|  \frac{\lambda q^k}{4b} \right| +|cq^k | 
\end{align*}
for some constant $C$. 
Thus 
\begin{equation*}
\sum_{k=1}^\infty  \left( \left| \sqrt{\beta_k}-\frac{1}{2} \right| +|\alpha_k |\right) <\infty.
\end{equation*}
The choice of $\gamma$ is now transparent.

The idea is to compare the asymptotic expression for $P_k(x)$ with \eqref{eq:nevai}  to determine the formula for $\mu^\prime(x)$. For this purpose we will use Darboux's method, which can be stated as follows. 
\begin{Proposition}[Darboux's Method ({see \cite[Th.\ 1.2.4]{MI2009}})]\label{darboux}
Let $f(z)$ and $g(z)$ be analytic in the disk $\{ z: |z|<r\}$ and assume that 
$$f(z)=\sum_{k=0}^{\infty} f_kz^k, \text{ } g(z)=\sum_{k=0}^{\infty} g_kz^k, \text{ } |z|<r.$$
If $f-g$ is continuous on the closed disk $\{ z: |z|\leq r\}$ then
$$f_k = g_k +o\left(r^{-k}\right).$$
\end{Proposition}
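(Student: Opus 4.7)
The plan is to apply Cauchy's coefficient formula on a circle approaching the boundary, and then recognize the resulting boundary integral as a Fourier coefficient of a continuous function on the circle. Setting $h:=f-g$, by hypothesis $h$ is analytic on $\{|z|<r\}$ with Taylor coefficients $h_k=f_k-g_k$, and extends continuously to the closed disk $\{|z|\leq r\}$.

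First I would apply Cauchy's formula on an intermediate circle: for every $\rho$ with $0<\rho<r$,
$$h_k=\frac{1}{2\pi i}\oint_{|z|=\rho}\frac{h(z)}{z^{k+1}}\,dz=\frac{1}{2\pi \rho^{k}}\int_0^{2\pi}h(\rho e^{i\theta})e^{-ik\theta}\,d\theta,$$
and then pass to the limit $\rho\to r$. Since $h$ is continuous on the compact set $\{|z|\leq r\}$ it is uniformly continuous there, so $h(\rho e^{i\theta})\to h(re^{i\theta})$ uniformly in $\theta$; combined with the convergence $\rho^{-k}\to r^{-k}$ for fixed $k$, the limit gives
$$r^{k} h_k=\frac{1}{2\pi}\int_0^{2\pi}h(re^{i\theta})e^{-ik\theta}\,d\theta.$$
The right-hand side is the $k$-th Fourier coefficient of the continuous, hence integrable, function $\theta\mapsto h(re^{i\theta})$ on $[0,2\pi]$; by the Riemann--Lebesgue lemma it tends to $0$ as $k\to\infty$, yielding $f_k-g_k=h_k=o(r^{-k})$ as required.

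The main obstacle is the passage to the limit $\rho\to r$: the individual functions $f$ and $g$ need not be continuous on the closed disk, so one cannot deform the contour for either of them in isolation, and indeed neither $f_k$ nor $g_k$ need satisfy any estimate beyond $O((r-\epsilon)^{-k})$ for $\epsilon>0$. The crucial observation is that analyticity is only required on the \emph{open} disk (to write Cauchy's formula for each $\rho<r$) while continuity is only required on the \emph{closed} disk (to justify the uniform-convergence argument as $\rho\to r$); since by hypothesis only the difference $h=f-g$ enjoys both properties simultaneously, the argument must be carried out for $h$ throughout rather than for $f$ and $g$ separately. Once this is done, Riemann--Lebesgue is a standard input and delivers the $\smallo(r^{-k})$ decay immediately.
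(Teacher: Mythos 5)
Your proof is correct: reducing to $h=f-g$, writing $r^{k}h_k$ as a Fourier coefficient of the continuous boundary function via Cauchy's formula on circles $|z|=\rho\to r$, and invoking the Riemann--Lebesgue lemma is exactly the standard argument for this transfer principle. The paper itself gives no proof, deferring to Theorem 1.2.4 of the cited reference, and your argument is precisely the one found there, so there is nothing to add.
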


%

With these preliminaries, we now proceed with our first result, the orthogonality relation for $P_k(x)$. 
\begin{Theorem} 
Let $q$ be real satisfying $0<|q|<1$, $c\in \mathbb{R} $, and $1+\lambda q^k/b >0$.
Let $P_k(x)$ be a set of polynomials defined by \eqref{mike-p-3term2} satisfying the initial conditions $P_0(x)=1$ and $P_1(x)=x-c$. 
Then we have the orthogonality relation:
\begin{equation*}
\int P_n(x)P_m(x)d\mu =
\frac{1}{4^n}\qrfac{-\lambda q/b}{n} 
\delta_{mn},
\end{equation*}
where $\mu$ has an absolutely continuous component, and
\begin{align}\label{mu-prime}
\mu^{\prime}(x) &=
\frac{2}{\pi}
\frac{\qrfac{-\lambda q/b}{\infty}}{|R|^2\sqrt{1-x^2} } \text{ for } x\in (-1,1),
\\
\intertext{with}
R&=\frac{-1}{i\sin\vartheta}
\sum_{m=0}^{\infty} \frac{\qrfac{- \lambda qe^{i\vartheta} /2bc}{m} }{\qrfac{q, qe^{2i\vartheta} }{m}}
(-2c)^m e^{im\vartheta} q^{m\choose 2} ,
\end{align}
and $x=\cos\vartheta$. Further, if $\mu$ has a discrete part,  it will lie outside $(-1,1)$.
\end{Theorem}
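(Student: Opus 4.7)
The plan splits naturally into two largely independent pieces. The norm $\int P_n^2\,d\mu$ is essentially free: \eqref{mike-p-3term2} gives $\beta_k = (1+\lambda q^k/b)/4$, positive by hypothesis, so the Spectral Theorem (Proposition~\ref{spectral}) yields $\int P_n P_m\,d\mu = (\beta_1\cdots\beta_n)\,\delta_{mn}$; the product telescopes as $\prod_{j=1}^n(1+\lambda q^j/b) = \qrfac{-\lambda q/b}{n}$, giving the claimed normalization at once. The hypotheses of Proposition~\ref{Nevai} are already in place in the surrounding text, so the existence of the absolutely continuous component on $[-1,1]$, and the fact that any discrete mass lies outside $(-1,1)$, are also automatic; only the explicit formula for $\mu'(x)$ remains.

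To compute $\mu'$ I would use Nevai's limit \eqref{eq:nevai} in reverse: produce an asymptotic of the form $P_k(x) = 2^{-k} c(\vartheta)\sin((k+1)\vartheta - \phi(\vartheta)) + o(2^{-k})$ and solve algebraically for $\mu'$. The asymptotic comes from Darboux's method (Proposition~\ref{darboux}) applied to the ordinary generating function $F(t) = \sum_{k \ge 0} P_k(x)\,t^k$. Multiplying \eqref{mike-p-3term2} by $t^{k+1}$, summing, and using $P_{-1} = 0$ produces the $q$-difference equation
\[
(1 - xt + t^2/4)\,F(t) = 1 - \bigl(ct + \tfrac{\lambda q t^2}{4b}\bigr) F(qt),
\]
which I would solve by iteration, converging because $F(q^n t) \to 1$ when $|q| < 1$. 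With $x = \cos\vartheta$ the quadratic factors as $(1 - te^{i\vartheta}/2)(1 - te^{-i\vartheta}/2)$, and telescoping the $q$-shifts of these two factors between successive iteration levels recasts the closed form as
\[
F(t) = \frac{\Phi(t)}{(1 - te^{i\vartheta}/2)(1 - te^{-i\vartheta}/2)},
\]
with $\Phi(t)$ an entire series in $t$ whose coefficients carry the weight $q^{\binom{n}{2}}$ and a factor $\qrfac{-\lambda qt/(4bc)}{n}$. The only singularities of $F$ on $|t| \le 2$ are simple poles at $t = 2e^{\pm i\vartheta}$; subtracting their principal parts before applying Proposition~\ref{darboux} gives
\[
P_k(x) = \frac{1}{2^k}\!\left(\frac{\Phi(2e^{-i\vartheta})\,e^{ik\vartheta}}{1 - e^{-2i\vartheta}} + \frac{\Phi(2e^{i\vartheta})\,e^{-ik\vartheta}}{1 - e^{2i\vartheta}}\right) + o(2^{-k}),
\]
and because $\Phi(2e^{i\vartheta}) = \overline{\Phi(2e^{-i\vartheta})}$ and $1 - e^{\mp 2i\vartheta} = \pm 2ie^{\mp i\vartheta}\sin\vartheta$, this collapses to $|\Phi(2e^{-i\vartheta})|\sin((k+1)\vartheta - \phi(\vartheta))/(2^k\sin\vartheta)$ for a suitable real phase $\phi(\vartheta)$.

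Comparing with \eqref{eq:nevai}, using $\sqrt{\beta_1 \cdots \beta_k} \to 2^{-k}\sqrt{\qrfac{-\lambda q/b}{\infty}}$, and squaring gives
\[
\mu'(x) = \frac{2\sin\vartheta\,\qrfac{-\lambda q/b}{\infty}}{\pi\,|\Phi(2e^{-i\vartheta})|^2}.
\]
Direct specialization of $\Phi$ at $t = 2e^{i\vartheta}$ reduces it to $-i\sin\vartheta\cdot R$, so $|\Phi(2e^{-i\vartheta})|^2 = |R|^2\sin^2\vartheta$ and the claimed form of $\mu'$ drops out after cancelling one factor of $\sin\vartheta = \sqrt{1-x^2}$. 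The main obstacle is the middle step: pushing the iteration of the $q$-difference equation to a closed form for $\Phi$ that is manifestly analytic on a neighbourhood of $|t| \le 2$, so Darboux's method applies, and that specializes at $t = 2e^{i\vartheta}$ to exactly the sum appearing in the definition of $R$. The bookkeeping of the $q^{\binom{n}{2}}$ weights and the telescoping of the $(te^{\pm i\vartheta}/2;q)$ factors across iteration levels is where arithmetic errors are most likely.
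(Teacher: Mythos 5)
Your proposal is correct and follows essentially the same route as the paper: the norm comes from the Spectral Theorem with $\beta_k=(1+\lambda q^k/b)/4$, and $\mu'$ is extracted by iterating the $q$-difference equation for the generating function, applying Darboux's method at the two dominant simple poles $t=2e^{\pm i\vartheta}$, and matching the resulting sinusoidal asymptotic against Nevai's limiting relation. The only cosmetic quibble is that the numerator $\Phi(t)$ in your factored form is not entire (the iterated solution still has poles at $2e^{\pm i\vartheta}q^{-j}$ for $j\ge 1$), but since these lie outside $|t|\le 2$ your stated requirement of analyticity on a neighbourhood of the closed disk of radius $2$ is what Darboux needs, exactly as in the paper.
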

\begin{Remarks}\ 
\begin{enumerate}
\item
We can take $x=\cos\vartheta$ and write the part of the integral where $\mu$ has an absolutely continuously component as follows:
\begin{equation*}
\frac{2\qrfac{-\lambda q/b}{\infty} }{\pi}
\int_{-1}^1 \frac{P_n(x)P_m(x)}{\sqrt{1-x^2} |R|^2}dx=
\frac{2\qrfac{-\lambda q/b}{\infty} }{\pi}
\int_{0}^\pi \frac{P_n(\cos\vartheta)P_m(\cos\vartheta)}{|R|^2}d\vartheta .
\end{equation*}
\item
The denominator polynomials we considered in Section~\ref{sec:mikecf1} are related to $P_k(x)$ as follows:
$$P_k(x)=\frac{D_k(\gamma x)}{\gamma^k(1-b)^k}.$$
\end{enumerate}
\end{Remarks}
\begin{proof}
We have already seen that the hypothesis for Nevai's theorem are satisfied. 

To use Darboux's method to find the formula for $P_k(x)$, we require its generating function. 
Let $P(t)$ denote the generating function of $P_k(x)$, that is,
$$P(t):= \sum_{k=0}^\infty P_k(x)t^k.$$
Multiply \eqref{mike-p-3term2} by $t^{k+1}$ and sum over $k\geq 0$ to find that
$$P(t) = \frac{1}{1-xt+t^2/4} - \frac{ct(1+\lambda tq /4bc)}{1-xt+t^2/4} P(tq).$$
We change the variable by taking $$x=\frac{e^{i\vartheta}+e^{-i\vartheta}}{2} \text{ } (= \cos \vartheta)$$
so
\begin{align*}
1-xt+t^2/4 &= (1-e^{i\vartheta}t/2)(1-e^{-i\vartheta}t/2)\cr
&= (1-\alpha t)(1-\beta t) .
\end{align*}
Using $\alpha$ and $\beta$ we can write the $q$-difference equation for $P(t)$ in the form 
\begin{align*}
P(t) &= \frac{1}{\qrfac{\alpha t, \beta t }{1} } - \frac{ct(1+\lambda tq/4bc)}
{\qrfac{\alpha t, \beta t }{1} } P(tq)
\cr
&= \sum_{k=0}^{\infty} \frac{\qrfac{-\lambda tq /4bc}{k} }{\qrfac{\alpha t, \beta t }{k+1}}
(-ct)^{k}q^{k\choose 2},
\end{align*}
by iteration.
So we obtain
\begin{equation}\label{mikecf2-gf}
P(t) = \sum_{k=0}^{\infty} \frac{\qrfac{-\lambda tq /4bc}{k} }{\qrfac{e^{i\vartheta} t/2, e^{-i\vartheta} t/2 }{k+1}}
(-ct)^{k}q^{k\choose 2}.
\end{equation}

Next we use Darboux's method to find an asymptotic expression for $P_k(x)$, where 
$x=\cos\vartheta$. The terms in the denominator are 
$$(1-e^{i\vartheta }t/2)(1-e^{i\vartheta}tq/2)\cdots (1-e^{-i\vartheta}t/2)(1-e^{i\vartheta}tq/2)\cdots.$$
The poles are at 
$$t=2e^{-i\vartheta}, 2e^{-i\vartheta}/q, 2e^{-i\vartheta}/q^2,\dots; \text{ and }
t=2e^{i\vartheta}, 2e^{i\vartheta}/q, 2e^{i\vartheta}/q^2\dots.$$ 
Since $0<|q|<1 $, the poles nearest to $t=0$ are at $t=2e^{-i\vartheta}$ and 
$t=2e^{i\vartheta}$.
We consider
\begin{equation}\label{mikecf2-Q}
Q(t):= \sum_{m=0}^{\infty} 
\frac{\qrfac{- \lambda qe^{i\vartheta} /2bc}{m} }
{\qrfac{q, qe^{2i\vartheta} }{m}}
\frac{(-2ce^{i\vartheta})^{m}q^{m\choose 2} }{1-e^{2i\vartheta}}\frac{1}{1-\frac{t}{2}e^{-i\vartheta}}
\end{equation}
and observe that $P(t)-Q(t)$ has a removable singularity at $t=2e^{i\vartheta}.$
Similarly, the we consider the conjugate
\begin{equation}\label{milecf2-Q-conj}
\overline{Q}(t)= \sum_{m=0}^{\infty} \frac{\qrfac{- \lambda qe^{-i\vartheta} /2bc}{m} }{\qrfac{q, qe^{-2i\vartheta} }{m}}
\frac{(-2ce^{-i\vartheta})^{m}q^{m\choose 2} }{1-e^{-2i\vartheta}}\frac{1}{1-\frac{t}{2}e^{i\vartheta}}
\end{equation}
and note that $P(t)-\overline{Q}(t)$ has a removable singularity at $t=2e^{-i\vartheta}.$
Thus we see that 
$$P(t)-Q(t)-\overline{Q}(t)$$
is continuous in $|t|\leq 2$. Thus Darboux's method can be used to find the formula for $P_k(x)$. Writing $$Q(t)=\sum_{k=0}^\infty Q_k t^k$$ we see that 
$$P_k(x) =Q_k+\overline{Q}_k + \smallo(2^{-k}).$$
The geometric series implies that
$$Q_k = \frac{e^{-i(k+1)\vartheta}}{2^{k+1} (-i)\sin\vartheta}
\sum_{m=0}^{\infty} \frac{\qrfac{- \lambda qe^{i\vartheta} /2bc}{m} }{\qrfac{q, qe^{2i\vartheta} }{m}}
(-2c)^m e^{im\vartheta} q^{m\choose 2} 
$$
and
$$\overline{Q}_k = \frac{e^{i(k+1)\vartheta}}{2^{k+1} i\sin\vartheta}
\sum_{m=0}^{\infty} \frac{\qrfac{- \lambda qe^{-i\vartheta} /2bc}{m} }{\qrfac{q, qe^{-2i\vartheta} }{m}}
(-2c)^m e^{-im\vartheta} q^{m\choose 2} .
$$
We denote by $R$ the part of $Q_k$ that is independent of $k$. That is, let
$$R:=\frac{-1}{i\sin\vartheta}
\sum_{m=0}^{\infty} \frac{\qrfac{- \lambda qe^{i\vartheta} /2bc}{m} }{\qrfac{q, qe^{2i\vartheta} }{m}}
(-2c)^m e^{im\vartheta} q^{m\choose 2} 
$$
and write it in a form
$$R=|R|e^{i\phi}.$$
It is clear that $\phi$ is independent of $k$ (though it depends on $\vartheta$). Now using this notation
we have the asymptotic formula for $P_k(x)$:
\begin{align}
P_k(x) &\sim Q_k+ \overline{Q}_k 
=\frac{|R|}{2^{k}}  
\sin\left((k+1)\vartheta-\phi+\frac{\pi}{2}\right). \label{formula-pk}
\end{align}


Now that we know the asymptotic formula for $P_k(x)$ we can compare with \eqref{eq:nevai} and obtain the expression for the measure $\mu^\prime$. To do so, we informally write \eqref{eq:nevai} as
$$\frac{P_k(x)\sqrt{\mu^\prime(x)}}{\sqrt{\beta_1\beta_2\cdots \beta_{k}}} \sim
\sqrt{\frac{2}{\pi}}  \frac{\sin\left( (k+1)\vartheta -\phi(\vartheta)\right)}{(1-x^2)^{1/4}}
.$$
Note that 
$$\beta_1\beta_2\cdots\beta_k=\frac{1}{4^k}\qrfac{-\lambda q/b}{k}.$$

Comparing \eqref{formula-pk} with the above, we find that
\begin{align*}
\mu^\prime (x) 
= \frac{2\qrfac{-\lambda q/b}{\infty}}{\pi \sqrt{1-x^2} |R|^2} \label{mu-prime}
\end{align*}
where $x=\cos\vartheta$.
In this manner, we have obtained an expression for $\mu^{\prime}$ from Nevai's theorem. This completes the proof. 
\end{proof}

\begin{Remarks}
We can take the special cases $\lambda=0$ and $a=0$ in \eqref{mikecf} and obtain analogous results for the 
corresponding special cases of Hirschhorn's continued fractions. The special case $a=0$ is a 
continued fraction considered by Ramanujan in his Lost Notebook, see 
\cite[Entry 6.3.1(iii)]{AB2005}. However, if we take $b=0$, 
 $\mu$ does not have an absolutely convergent component. We consider this case in 
 Section~\ref{sec:specialcases}.
 \end{Remarks}

\section{The Stieltjes Transform}\label{sec:stieltjes}

We now recall Proposition \ref{prop:stieltjes} which says that the continued fraction is given,
for $x\not\in \operatorname{supp}(\mu)$, by  the Stieltjes transform of the measure $\mu$. In this section, we provide the evaluation of the continued fraction. In addition, we invert the Stieltjes transform using \eqref{stieltjes-inverse}, and obtain an alternate expression for $\mu^\prime$. 

Recall the notation $P_k^*(x)$ for the polynomials satisfying 
\eqref{mike-p-3term2} with initial conditions $P^*_0(x)=0$ and $P^*_1(x)=1$. The polynomials 
$P_k(x)$ satisfy the same recurrence with the initial conditions
$P_0(x)=1$ and $P_1(x)=x-c$.
We need to compute, for $x\not\in \operatorname{supp}(\mu)$,
\begin{equation}\label{cf-mike-transformed}
X(x)= \lim_{k\to\infty} \frac{P_k^*(x)}{P_k(x)}.  
\end{equation}
Again we will  appeal to Darboux's theorem. However, this time the computation of the formula 
is not for $x\in (-1,1)$ but for $x\in \mathbb{C}\setminus\operatorname{supp}(\mu)$.
 


Note that since $x=\cos\vartheta$,  $$e^{\pm i\vartheta} = x\pm \sqrt{x^2-1}.$$
We choose a 
branch 
 of $\sqrt{x^2-1}$ in such a way that
$$\sqrt{x^2-1} \sim x, \text{ as } x\to\infty,$$
so that
$\left| e^{-i\vartheta}\right|<\left|e^{i\vartheta}\right|$
for $x$ in the upper half plane, and 
$\left|e^{i\vartheta}\right|<\left|e^{-i\vartheta}\right|$
for $x$  in the lower half plane. 
We use the notation  $\rho_1=e^{-i\vartheta}$ and $\rho_2=e^{i\vartheta}$. 

\begin{Theorem}\label{th:cf-values} Let $X(x)$ be the continued fraction in \eqref{cf-mike-transformed}. Let $\rho_1$ and $\rho_2$ be as above.  Let $F$ and $G$ be defined as follows:
\begin{align*}
F(\rho) &= \sum_{m=0}^{\infty} \frac{\qrfac{- \lambda q\rho /2bc}{m} }{\qrfac{q, q\rho^2 }{m}}
(-2c\rho)^{m}q^{\binom{m+1}{2}}, \\
\intertext{and}
G(\rho) &= \sum_{m=0}^{\infty} \frac{\qrfac{- \lambda q\rho /2bc}{m} }{\qrfac{q, q\rho^2 }{m}}
(-2c\rho)^{m}q^{\binom{m}{2}} .
\end{align*}
Then  $X(x)$ converges for all complex numbers $x\not\in (-1,1)$, except possibly a finite set of 
points,  and is given by
$$X(x)= 2\rho\frac{F(\rho)}{G(\rho)}, $$
where $\rho$ is given by:
$$\rho = 
\begin{cases}
 \rho_1, & \text{if } \Im(x)>0 , \text{ or } x > 1 \text{ ($x$ real)} \cr
 \rho_2,  & \text{if } \Im(x)<0, \text{ or } x < -1 \text{ ($x$ real)}  \cr
1, & \text{if } x = 1, \cr
 -1, & \text{if } x = -1. 
\end{cases}
$$
\end{Theorem}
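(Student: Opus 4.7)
The plan is to parallel the proof of the previous theorem, but now to exploit the fact that for $x \notin [-1,1]$ the two roots $\rho_1, \rho_2$ of $z^2 - 2xz + 1 = 0$ (with $\rho_1\rho_2 = 1$) have distinct moduli. Writing $\rho$ for the root of strictly smaller modulus and $\rho' = 1/\rho$ for the other, the pole of the generating function closest to the origin is uniquely at $t = 2\rho$, so Darboux's method extracts a single leading term for each of $P_k$ and $P_k^*$, and $X(x)$ should follow from taking the ratio.

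The first step is to derive a generating function for $P_k^*$ analogous to \eqref{mikecf2-gf}. Setting $P^*(t) = \sum_{k\geq 0} P_k^*(x) t^k$, multiplying the recurrence \eqref{mike-p-3term2} by $t^{k+1}$, summing for $k \geq 1$, and using $P_0^* = 0$, $P_1^* = 1$ gives the $q$-difference equation
$$(1 - xt + t^2/4)\,P^*(t) = t - ct\,(1 + \lambda tq/4bc)\,P^*(qt),$$
which iterates to
$$P^*(t) = \sum_{k \geq 0} \frac{\qrfac{-\lambda tq/4bc}{k}}{\qrfac{\alpha t, \beta t}{k+1}} (-c)^k t^{k+1} q^{\binom{k+1}{2}},$$
with $\alpha = e^{i\vartheta}/2$, $\beta = e^{-i\vartheta}/2$. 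This has the same pole structure as $P(t)$, the only change being an extra $t\, q^k$ in each summand; the identity $\binom{k}{2}+k=\binom{k+1}{2}$ makes the shape of the series genuinely parallel.

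Next I would compute the principal parts at $t = 2\rho$. Since $\rho\rho' = 1$, the vanishing factor there is $1 - \rho't/2$. Splitting it off, and evaluating the remainder at $t = 2\rho$ by means of $\qrfac{\rho^2}{k+1} = (1-\rho^2)\qrfac{q\rho^2}{k}$ and the reduction of the factor containing $\qrfac{1}{k+1}$ to $(1-\rho't/2)\qrfac{q}{k}$, the principal parts of $P$ and $P^*$ emerge as
$$Q(t) = \frac{G(\rho)}{(1-\rho^2)(1 - \rho't/2)}, \qquad Q^*(t) = \frac{2\rho\, F(\rho)}{(1-\rho^2)(1 - \rho't/2)}.$$
The next-nearest poles lie at $t = 2\rho'$ and $t = 2\rho/q$, both strictly farther than $2|\rho|$, so $P - Q$ and $P^* - Q^*$ extend analytically to a disk of radius greater than $2|\rho|$. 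Darboux's method (Proposition \ref{darboux}) then yields $P_k(x) \sim Q_k$ and $P_k^*(x) \sim Q^*_k$; since both coefficients are the same geometric $(\rho'/2)^k/(1-\rho^2)$ multiplied respectively by $G(\rho)$ and $2\rho F(\rho)$, taking the ratio produces $X(x) = 2\rho F(\rho)/G(\rho)$.

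Finally the identification of $\rho$ with the cases in the statement follows from the prescribed branch of $\sqrt{x^2-1}$, which forces $|\rho_1|<1<|\rho_2|$ when $\Im x>0$ or $x>1$ and reverses this when $\Im x<0$ or $x<-1$. The main obstacle is the degenerate endpoints $x = \pm 1$: there $\rho_1 = \rho_2 = \pm 1$ coincide, $1 - \rho^2 = 0$, and what was a simple pole at $t = 2\rho$ becomes a double pole, so the principal part calculation must be replaced by a confluent one; the stated values $\rho = \pm 1$ then arise from a limiting argument in which the factor $1-\rho^2$ cancels between $G(\rho)$, $F(\rho)$ and the common denominator. The advertised finite exceptional set consists of (at most) those values of $x$ for which $G(\rho(x))$ vanishes; since $G$ is analytic in $x$, these are isolated and hence finite in any compact subset of the domain.
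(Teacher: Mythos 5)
Your proposal is correct and takes essentially the same route as the paper: the generating function for $P_k^*$ you derive is exactly the paper's (after pulling the $k=0$ factors of $\qrfac{\alpha t,\beta t}{k+1}$ out front), the comparison functions and Darboux asymptotics at the unique nearest pole $t=2\rho$ match, and the ratio gives $2\rho F(\rho)/G(\rho)$ with the same case analysis for $\rho$. One small correction at $x=\pm1$: the resolution is not a cancellation of $1-\rho^2$ inside $F$ and $G$ (both are perfectly finite there, since $q\rho^2=q$), but rather that the double pole at $t=\pm2$ produces an extra factor of $(k+1)$ in the Darboux asymptotics of both $P_k^*$ and $P_k$, which cancels in the ratio.
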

\begin{proof}

We first compute asymptotic formulas for $P_k^*(x)$ and $P_k(x)$ in the upper half plane. It is not difficult to see that the generating function of $P_k^{*}(x)$ is given by
\begin{equation}\label{mikecf2-gf-num}
P^*(t) = \frac{t}{(1-\rho_1 t/2) (1-\rho_2 t/2)}
\sum_{k=0}^{\infty} \frac{\qrfac{-\lambda tq /4bc}{k} }{\qrfac{\rho_1 qt/2, \rho_2 qt/2 }{k}}
(-ct)^{k}q^{\binom{k+1}2}.
\end{equation}
When $x$ is in the upper half-plane, the singularity nearest the origin is at $t=2\rho_1$. 
Let $Q^*(t)$ be the series
$${Q^*}(t)= 
\frac{2\rho_1}{(1-\rho_1^2) (1-\rho_2 t/2)}
\sum_{m=0}^{\infty} \frac{\qrfac{- \lambda q\rho_1 /2bc}{m} }{\qrfac{q, q\rho_1^2 }{m}}
(-2c\rho_1)^{m}q^{\binom{m+1}{2}}. $$
Then $P^*(t)-Q^*(t)$ has a removable singularity at $t=2\rho_1$. By Darboux's method, 
we have
$$P_k^*(x) \sim 
\frac{2\rho_1 \rho_2^k}{2^k(1-\rho_1^2) }
\sum_{m=0}^{\infty} \frac{\qrfac{- \lambda q\rho_1 /2bc}{m} }{\qrfac{q, q\rho_1^2 }{m}}
(-2c\rho_1)^{m}q^{\binom{m+1}{2}}
= \frac{2\rho_1 \rho_2^k}{2^k(1-\rho_1^2) } F(\rho_1) . $$
Similarly, considering the generating function of $P_k(x)$ when $x$ is in  the upper half plane, we find that
$$P_k (x) \sim 
\frac{\rho_2^k}{2^k(1-\rho_1^2) }
\sum_{m=0}^{\infty} \frac{\qrfac{- \lambda q\rho_1 /2bc}{m} }{\qrfac{q, q\rho_1^2 }{m}}
(-2c\rho_1)^{m}q^{\binom{m}{2}}
= \frac{\rho_2^k}{2^k(1-\rho_1^2) } G(\rho_1) . $$
Thus for $x$ in the upper half-plane, we find that
$$X(x)=\lim_{k\to\infty} \frac{P_k^*(x)}{P_k(x)} =2\rho_1\frac{F(\rho_1)}{G(\rho_1)}.$$
The same calculation works when $x$ is real, and $x>1$.

When $x$ is  in the lower half-plane, since $t=2\rho_2$ is the singularity nearest to the origin, 
a similar calculation yields
$$X(x)=\lim_{k\to\infty} \frac{P_k^*(x)}{P_k(x)} =2\rho_2\frac{F(\rho_2)}{G(\rho_2)}.$$
This is also valid for real values of $x$ such that $x<-1$.

For $x=1$, we find that the generating function for $P_k^*(1)$ is given by
\begin{equation*}
P^*(t) = \frac{t}{(1- t/2)^2}
\sum_{k=0}^{\infty} \frac{\qrfac{-\lambda tq /4bc}{k} }{\qrfac{qt/2}{k}^2}
(-ct)^{k}q^{\binom{k+1}2}.
\end{equation*}
The singularity nearest the origin is at $t=2$. The dominating term of the comparison function 
is given by
$${Q^*}(t)= 
\frac{2}{(1-t/2)^2}
\sum_{m=0}^{\infty} \frac{\qrfac{- \lambda q /2bc}{m} }{\qrfac{q, q }{m}}
(-2c)^{m}q^{\binom{m+1}{2}}. $$
(The singular part of $P^*(t)$ has an additional term of the form $A/(1-t/2)$, but that does not contribute to $P_k^*(1)$.)
Darboux's method yields
$$P_k^*(1) \sim 
\frac{2(k+1)}{2^k }
\sum_{m=0}^{\infty} \frac{\qrfac{- \lambda q /2bc}{m} }{\qrfac{q, q }{m}}
(-2c)^{m}q^{\binom{m+1}{2}}
= \frac{2 (k+1)}{2^k } F(1) . $$
Similarly, we find that 
$$P_k(1)\sim \frac{(k+1)}{2^k } G(1), $$
and so
$$X(1) = 2\frac{F(1)}{G(1)},$$
as required. The computation at $x=-1$ is similar. 

In the above, we cannot have $G(\rho)=0$. Replace $F$ and $G$ by $\qrfac{q\rho^2}{\infty}F$ and $\qrfac{q\rho^2}{\infty}G$. Now $G$ is an entire function so has only finitely many zeros in any bounded set. 
The zeros of this modified $G$ are the mass points of the discrete part of the measure.   For a further explanation of why there may be a finite set of points outside of $(-1,1)$ where $X(x)$ does not converge, see the remarks at the end of the section.
\end{proof}

On inverting the Stieltjes Transform using \eqref{stieltjes-inverse}, we have another formula for the absolutely continuous component of the orthogonality measure.
\begin{Theorem} Let $\mu^{\prime}$ be given by \eqref{mu-prime} and let $F$ and $G$ be as in 
Theorem~\ref{th:cf-values}. Then, for $x\in (-1,1)$, we have
\begin{equation*}
\mu^{\prime}(x) = \frac{1}{\pi i}
\left(
\rho_2\frac{F(\rho_2)}{G(\rho_2)}-\rho_1\frac{F(\rho_1)}{G(\rho_1)}
\right)
.
\end{equation*}
\end{Theorem}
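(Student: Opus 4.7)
The plan is to apply the Stieltjes inversion formula \eqref{stieltjes-inverse} directly to the closed form for $X(x)$ given in Theorem~\ref{th:cf-values}. Since that theorem already delivers $X$ as an explicit piecewise analytic function on $\mathbb{C}\setminus[-1,1]$, the only remaining task is to read off its jump across $(-1,1)$.

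For $x\in(-1,1)$ I would write $x=\cos\vartheta$ with $\vartheta\in(0,\pi)$. According to the branch convention fixed just before Theorem~\ref{th:cf-values}, as $x$ is approached from the upper half plane the selected branch $\rho(x)$ tends to $\rho_1=e^{-i\vartheta}$, and as it is approached from the lower half plane $\rho(x)$ tends to $\rho_2=e^{i\vartheta}$. Theorem~\ref{th:cf-values} then supplies the boundary values
$$X(x+i0^+)=2\rho_1\frac{F(\rho_1)}{G(\rho_1)},\qquad X(x-i0^+)=2\rho_2\frac{F(\rho_2)}{G(\rho_2)}.$$
Substituting into \eqref{stieltjes-inverse} yields
$$\mu^{\prime}(x)=\frac{X(x-i0^+)-X(x+i0^+)}{2\pi i}=\frac{1}{\pi i}\left(\rho_2\frac{F(\rho_2)}{G(\rho_2)}-\rho_1\frac{F(\rho_1)}{G(\rho_1)}\right),$$
which is the stated identity.

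The one step that genuinely needs justification is that the boundary values $F(\rho_j)/G(\rho_j)$ are finite, i.e.\ that $G(\rho_j)\neq 0$ on the open arc $\vartheta\in(0,\pi)$. Continuity of $F$ and $G$ up to the unit circle is immediate from the rapid convergence of their defining $q$-series, since $|q|<1$ and no factor $\qrfac{q\rho^2}{m}$ vanishes on $|\rho|=1$ away from $\rho=\pm 1$. Non-vanishing of $G$ at $\rho_1,\rho_2$ is the main potential obstacle, but it can be obtained from a direct comparison with the quantity $R$ appearing in the formula \eqref{mu-prime}: inspecting the series shows $G(e^{\pm i\vartheta})$ equals the sum defining $R$ up to the factor $-1/(i\sin\vartheta)$, so $|G(\rho_1)G(\rho_2)|=\sin^2\vartheta\,|R|^2$. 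Because the density \eqref{mu-prime} is already known to be positive and finite on $(-1,1)$, $|R|$ is nonzero there, forcing $G(\rho_1),G(\rho_2)\neq 0$ and legitimising the inversion step. Once this small verification is in place the theorem is an immediate consequence of Theorem~\ref{th:cf-values} and \eqref{stieltjes-inverse}.
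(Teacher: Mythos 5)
Your argument is exactly the paper's proof: take the boundary values $X(x\pm i0^+)$ supplied by Theorem~\ref{th:cf-values} (with $\rho_1$ from above, $\rho_2$ from below) and substitute them into the Stieltjes inversion formula \eqref{stieltjes-inverse}. Your additional check that $G(\rho_1),G(\rho_2)\neq 0$ on the open arc is a welcome extra precaution that the paper omits, but it does not change the route.
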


\begin{proof}
From Theorem~\ref{th:cf-values}, it follows that
in the upper half-plane,
$$X(x+i0^+)=\lim_{k\to\infty} \frac{P_k^*(x)}{P_k(x)} =2\rho_1\frac{F(\rho_1)}{G(\rho_1)},$$
where now $x$ is a real number in $(-1,1)$. 
Similarly, we have 
$$X(x-i0^+)=\lim_{k\to\infty} \frac{P_k^*(x)}{P_k(x)} =2\rho_2\frac{F(\rho_2)}{G(\rho_2)}.$$
The theorem now follows from \eqref{stieltjes-inverse}.
\end{proof}

%

\begin{Remarks} 
Before closing this section, we make a few remarks concerning 
 the discrete part of the measure $\mu$. Recall that Nevai's theorem says that the discrete part of the measure will lie outside $(-1,1)$. Let $X(x)=F/G $ represent the continued fraction, with $F$ and $G$ entire functions (as above). 
 Assume that 
$x_0$ is an isolated mass point of weight $m_0$. 
Then $X(x)$ is of the form
\begin{align*}
X(x) &=\frac{F}{G} =\int \frac{d\mu(t)}{x-t}\cr
&= \int_{-1}^1 \frac{\mu^{\prime }d t}{x-t} + 
\frac{m_0}{x-x_0} + \text{terms from other isolated mass points} 
.
\end{align*}
Thus $X(x)$ has a simple pole at $x_0$ with residue equal to $m_0$. Since the measure is positive, the residue $m_0$ is positive. 
\begin{enumerate}
\item 
This implies that the mass points of the discrete part of the measure occur at the poles of the continued fraction $X(x)$ outside $(-1,1)$. 
 The function $G$ is an entire function of $\rho$ of order zero so it must have infinitely many zeros. However it can only have finitely many zeros in the unit disc, hence $X(x)$ has finitely many poles because we chose $|\rho| < 1$. 

\item We can show that the zeros of $F(x)$ interlace with the zeros of $G(x)$. 
The poles of $X(x)$ occur at the zeros of $G$. If the pole is at $x=\rho$, 
we have 
$$m=\frac{F(\rho)}{G^{\prime}(\rho)}>0.$$
Thus, $F$ and $G^{\prime}$ have the same sign. Now at two successive zeros of
$G(x)$, the sign of $G^{\prime}(x)$ will be different. And thus the sign of $F(x)$ changes at two successive zeros of $G(x)$. This implies that $F$ has a zero between two successive  zeros of $G$.  
\end{enumerate}
\end{Remarks}
Unfortunately, we are unable to compute the zeros of $G$ from our formulas, and thus cannot say 
much more about the discrete part of the measure. 
%

\section{Solutions of the recurrence that are moments}\label{sec:moments}

Recall the definition of the $q$-integral:
\begin{equation*}
\int_{a}^b f(t) d_q t:= b(1-q) \sum_{n=0}^\infty q^n f(bq^n) -
a(1-q) \sum_{n=0}^\infty q^n f(aq^n).
\end{equation*}
In this section we find a solution $p_k(x)$ of \eqref{mike-p-3term2} of the form
\begin{equation}\label{pk-def}
p_k(x) = \int_{t_1}^{t_2} t^k f(t) d_q t,
\end{equation}
following a technique developed by Ismail and Stanton in \cite{IS1997, IS1998, IS2002}. 

We will use the integration by parts formula
\begin{equation}\label{qbyparts}
\int_a^b f(t) g(qt) d_q t = \frac{1}{q} \int_a^b g(t) f(t/q) d_q t
+\frac{1-q}{q} \big( ag(a)f(a/q) - bg(b)f(b/q)\big).
\end{equation}
This formula follows from the definition of the $q$-integral. 

We will require the notation of {\em basic hypergeometric series} (or $_r\phi_s$ series). This series is of the form
\begin{equation*}
_{r}\phi_s \left[\begin{matrix} 
a_1,a_2,\dots,a_r \\
b_1,b_2,\dots,b_s\end{matrix} ; q, z
\right] :=
\sum_{k=0}^{\infty} \frac{\qrfac{a_1,a_2,\dots, a_r}{k}}{\qrfac{q, b_1,b_2,\dots, b_s}{k}}
\left( (-1)^kq^{\binom k2}\right)^{1+s-r} z^k.
\end{equation*}
When $r=s+1$, the series converges for $|z|<1$. See Gasper and Rahman~\cite{GR90} for further convergence conditions for these series. 

\begin{Theorem} 
Let $|\lambda q/b|<1$.  With $x=\cos\vartheta$, we define $p_k(x)$ as the $q$-integral
\begin{multline}
p_k(x)  \label{final-pk-qint}
:=
 \frac
 {4 (-i\sin\vartheta)}
 {(1-q)}
\frac
{\qrfac{ 2c e^{i\vartheta}, 2c e^{-i\vartheta} }{\infty}}
{\qrfac{q, e^{2i\vartheta},  e^{-2i\vartheta} }{\infty}}\cr
\times
\int_{\frac{1}{2}e^{-i\vartheta}}^{\frac{1}{2}e^{i\vartheta}} t^k 
\frac{\qrfac{2q e^{i\vartheta} t,2q e^{-i\vartheta} t,  -\lambda q/4bct}{\infty}}
{\qrfac{ 4c t, q/4c t}{\infty}}
d_q t.
\end{multline}
Then 
$p_k(x)$ satisfies the recurrence relation  \eqref{mike-p-3term2}. 

Further, let $|\lambda q/2bc|<1$. Then, for $\Im (x) \ge 0$, we have
\begin{subequations}
\begin{align}
\label{final-pka}
p_k(x) &=
\frac{e^{ik\vartheta}\qrfac{2ce^{-i\vartheta}}{k} \qrfac{-\frac{\lambda q}{2bc }e^{-i\vartheta}}{\infty}}{2^{k}
\qrfac{ \frac{q}{2c}e^{-i\vartheta}}{\infty}}
 \ \!
 _{2}\phi_1
\left[ \begin{matrix} 
 -{b}q^{-k}/{\lambda},  0
\\
q^{1-k} e^{i\vartheta}/2c 
\end{matrix}
;q, -\frac{\lambda q}{2bc}e^{-i\vartheta}
\right]\\
\intertext{ and, for $\Im (x) \le 0 $, we have}
p_k(x) &=
\frac{e^{-ik\vartheta}\qrfac{2ce^{i\vartheta}}{k} \qrfac{-\frac{\lambda q}{2bc }e^{i\vartheta}}{\infty}}{2^{k}
\qrfac{ \frac{q}{2c}e^{i\vartheta}}{\infty}}
 \ \!
 _{2}\phi_1
\left[ \begin{matrix} 
 -{b}q^{-k}/{\lambda},  0
\\
q^{1-k} e^{-i\vartheta}/2c
\end{matrix}
;q, -\frac{\lambda q}{2bc}e^{i\vartheta}
\right]. \label{final-pkb}
\end{align}
\end{subequations}
\end{Theorem}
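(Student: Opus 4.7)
The statement has two independent pieces: (i) that the $q$-integral defined in (\ref{final-pk-qint}) satisfies the recurrence (\ref{mike-p-3term2}), and (ii) that it evaluates to the explicit ${}_2\phi_1$ forms (\ref{final-pka}) and (\ref{final-pkb}) in the two half-planes. For (i), the plan is to follow the Ismail--Stanton moment method. Denote by
$$f(t) := \frac{\qrfac{2qe^{i\vartheta}t,\,2qe^{-i\vartheta}t,\,-\lambda q/(4bct)}{\infty}}{\qrfac{4ct,\,q/(4ct)}{\infty}}$$
the ratio of infinite products in the integrand. Applying the shift $(a;q)_\infty = (1-a)(aq;q)_\infty$ to each of the five factors and using the factorization $(1-2qe^{i\vartheta}t)(1-2qe^{-i\vartheta}t) = 1-4qxt+4q^2 t^2$ coming from $x = \cos\vartheta$, one verifies the $q$-difference equation
$$(1 - 4qxt + 4q^2 t^2)\, f(qt) \;=\; -\Big(4ct + \tfrac{\lambda}{b}\Big)\, f(t).$$
Multiplying by $t^{k-1}$ and $q$-integrating over $[e^{-i\vartheta}/2,\,e^{i\vartheta}/2]$ produces three integrals of the form $\int t^m f(qt)\,d_q t$. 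An application of the $q$-integration by parts identity (\ref{qbyparts}), with $g(qt) = f(qt)$ and $h(t) = t^m$, converts each into $q^{-m-1}\int t^m f(t)\,d_q t$ modulo boundary terms, and the boundary terms vanish because at $t = e^{\mp i\vartheta}/(2q)$ the factor $\qrfac{2q e^{\pm i\vartheta} t}{\infty}$ in $f(t/q)$ acquires the zero factor $(1-1)$. Collecting the three contributions and dividing by $4q^{-k}$ yields exactly (\ref{mike-p-3term2}); the $k$-independent prefactor in the definition of $p_k$ passes through cleanly.

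For (ii), I would write the $q$-integral as the difference of two geometric-style sums over the grids $t_2 q^n = e^{i\vartheta}q^n/2$ and $t_1 q^n = e^{-i\vartheta}q^n/2$. Using $\qrfac{aq^n}{\infty} = \qrfac{a}{\infty}/\qrfac{a}{n}$ together with the reversal $\qrfac{aq^{-n}}{\infty} = \qrfac{a}{\infty}(-a)^n q^{-\binom{n+1}{2}}\qrfac{q/a}{n}$ to evaluate $f(t_2 q^n)$ condenses the sum at $t_2$ into a ${}_2\phi_1$ with upper parameters $-2bce^{i\vartheta}/\lambda,\,0$, lower parameter $qe^{2i\vartheta}$, and argument $-\lambda q^{k+1}/b$; after multiplication by the normalizing constant in (\ref{final-pk-qint}), the trigonometric prefactor collapses through $-i\sin\vartheta/(1-e^{2i\vartheta}) = 1/(2e^{i\vartheta})$ to give a clean $e^{ik\vartheta}/2^k$ times a ratio of $q$-Pochhammer products. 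The contribution from $t_1$ is obtained by swapping $\vartheta \mapsto -\vartheta$ (with a sign).

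The main obstacle is then identifying the sum of these two endpoint contributions with the \emph{single} ${}_2\phi_1$ in (\ref{final-pka}): the parameters naturally produced by the $q$-integral evaluation differ from those in (\ref{final-pka}), and matching them head-on requires a Heine-type transformation for ${}_2\phi_1$'s with a zero upper parameter --- essentially a confluent limit of one of Heine's three classical transformations. A cleaner route, which I would prefer, is to bypass that transformation entirely: verify that the right-hand side of (\ref{final-pka}) also satisfies the recurrence (\ref{mike-p-3term2}) via the standard three-term contiguous relations for ${}_2\phi_1$, and then check that both sides agree at $k=0$ and $k=1$. Since (\ref{mike-p-3term2}) is second-order, two initial values determine the sequence uniquely, forcing the identity. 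Finally, (\ref{final-pkb}) is the complex conjugate of (\ref{final-pka}) under $\vartheta \mapsto -\vartheta$; its regime of validity in the lower half-plane corresponds to the opposite branch of $\sqrt{x^2-1}$ fixed at the start of Section~\ref{sec:stieltjes}.
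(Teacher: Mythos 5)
Your verification that the $q$-integral \eqref{final-pk-qint} satisfies \eqref{mike-p-3term2} is correct and is essentially the paper's argument run in the opposite direction: the paper \emph{derives} the weight from the first-order equation $f(t)\left(x-t-1/4t\right)=f(t/q)\left(c/q+\lambda/4bt\right)$ and then chooses the endpoints, whereas you verify that equation for the given product and $q$-integrate. One bookkeeping point: with your orientation of \eqref{qbyparts} (trading $\int t^m f(qt)\,d_qt$ for $q^{-m-1}\int t^m f(t)\,d_qt$), the boundary terms involve the weight at the endpoints $t=e^{\pm i\vartheta}/2$ themselves, not at $e^{\pm i\vartheta}/(2q)$; they vanish not because $f$ vanishes there but because the three boundary contributions combine into a multiple of $(1-4xt+4t^2)$ evaluated at $t=e^{\pm i\vartheta}/2$, which is zero. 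Either mechanism works, but the reason you state belongs to the other orientation of the integration by parts.

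The gap is in part (ii). Expanding the $q$-integral over the two grids does yield the two-term combination of ${}_{2}\phi_{1}$'s with upper parameters $-2bce^{\pm i\vartheta}/\lambda,\,0$, lower parameter $qe^{\pm 2i\vartheta}$ and argument $-\lambda q^{k+1}/b$ (the paper's \eqref{pk-sum}), and you correctly locate the crux in collapsing this pair into the single series of \eqref{final-pka}. But your proposed bypass --- show both sides satisfy \eqref{mike-p-3term2} and match them at $k=0,1$ --- does not avoid that crux: to compute $p_0(x)$ and $p_1(x)$ from the integral you must again sum over both grids, and the assertion that the resulting two-term combination equals the single ${}_{2}\phi_{1}$ of \eqref{final-pka} at $k=0$ and $k=1$ is exactly the identity you are trying to sidestep, specialized to argument $-\lambda q/b$ and $-\lambda q^{2}/b$; it is no easier there. (You would also owe an actual verification that the right-hand side of \eqref{final-pka}, with $k$ entering through two parameters and the prefactor, satisfies the recurrence via contiguous relations --- plausible, but not free.) The route the paper takes, and which your first sketch is already on, is to apply the three-term connection formula \cite[Eq.~(III.31)]{GR90} in the form \eqref{III.31-special} to fuse the two series into a single ${}_{1}\phi_{1}$, and then \cite[Eq.~(III.4)]{GR90} to recast that as the ${}_{2}\phi_{1}$ of \eqref{final-pka}; the lower half-plane formula \eqref{final-pkb} then follows by $\vartheta\mapsto-\vartheta$ as you say. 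You should carry out that transformation rather than defer it to the initial values.
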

\begin{Remarks}
\ 
\begin{enumerate}
\item The ratio
$p_k(x)/p_0(x),$
is a solution of \eqref{mike-p-3term2} with value $1$ at $k=0$. 
\item
When 
$b=-\lambda$, the $_2\phi_1$ in \eqref{final-pka} (and \eqref{final-pkb}) terminates,
 and  we find that $p_0(x)=1$ and $p_1(x)$ is a polynomial of 
degree $1$. Indeed, we see that $p_1(x)=x-c$, so the initial conditions will match those satisfied by 
the denominator polynomials $P_k(x)$  (with $b=-\lambda$)  considered in 
Section \ref{sec:Mike-Nevai}. In that case,  our calculations are a special case of the calculations in 
Ismail and Stanton \cite{IS1997} in their proof of Theorem 2.1(B). 
\end{enumerate}
\end{Remarks}

\begin{proof}


For now, we call our solution $g_k(x)$ and assume it satisfies \eqref{pk-def}. We will show how one can guess $f(t)$, and the limits $t_1$ and $t_2$. 
From the recurrence relation 
\eqref{mike-p-3term2}, we must have
\begin{align}\label{qint1}
x\int_{t_1}^{t_2}  t^k f(t) d_q t & =
	 \int_{t_1}^{t_2} t^{k+1} f(t) d_q t 
	+ c\int_{t_1}^{t_2} (qt)^{k} f(t) d_q t \cr
& \hspace{1in}
	+ \frac{1}{4}\int_{t_1}^{t_2} t^{k-1} f(t) d_q t
	+\frac{\lambda q}{4b}\int_{t_1}^{t_2} (qt)^{k-1} f(t) d_q t\cr
&= \int_{t_1}^{t_2} t^{k} \left(t f(t) + f(t)/4t\right) d_q t  +\cr
&\hspace{1in}	 
	\int_{t_1}^{t_2} t^{k} \left(c f(t/q)/q  +\lambda f(t/q)/4bt\right) d_q t, 
\end{align}
where we use \eqref{qbyparts} and assume that
$$f(t_1/q) =0 = f(t_2/q)$$
in the last step. Now \eqref{qint1} will be satisfied if
\begin{equation}\label{f-feqn}
f(t)\big(x-t-{1}/{4t}\big) = f(t/q) \big(c/q+\lambda/4bt\big),
\end{equation}
or
\begin{align*}
f(t) &=
 \frac{-b (1-\alpha t)(1-\beta t)}{\lambda(1+4bct/\lambda )} f(tq),
\end{align*}
where $\alpha$ and $\beta$ are such that
$$1-4qxt +4q^2t^2 =(1-\alpha t)(1-\beta t).$$
For convenience we change the variable by taking
$$x=\cos\vartheta = \frac{e^{i\vartheta} + e^{-i\vartheta}}{2}$$
so
$$\alpha = 2qe^{i\vartheta} \text{ and } \beta = 2qe^{-i\vartheta}.$$
Now if we find a function $h(t)$ such that
\begin{equation}\label{h-feqn}
h(t)=\frac{-b}{\lambda}h(tq),
\end{equation}
then we can write $f$ as
$$f(t)=\frac{\qrfac{\alpha t,\beta t}{\infty}}{\qrfac{-4bct/\lambda}{\infty}} h(t).$$

To find an $h(t)$ which satisfies \eqref{h-feqn}, we turn to the elliptic theta factorials, defined for 
$z\neq 0$ and $|q|<1$ as follows:
$$\elliptictheta{z}:=\qrfac{z, q/z}{\infty}.$$
Note the {\em quasiperiodicity} property
$$\frac{\elliptictheta{z}}{\elliptictheta{zq}}=-z.$$
This suggests that we can take $h(t)$ of the form
\begin{align*}
h(t)=\frac{\elliptictheta{At}}{\elliptictheta{Bt}}\cr
\intertext{so that} 
\frac{h(t)}{h(tq)} = \frac{A}{B}.
\end{align*}
We postpone the selection of $A$ and $B$ until later, but assume that
$$\frac{A}{B}=  \frac{-b}{\lambda},$$
so that \eqref{h-feqn} is satisfied. 

Thus, with $A$ and $B$ as above, we find a solution $f(t)$ of \eqref{f-feqn} given by
%
\begin{equation}\label{f-final}
f(t) = 
\frac{\qrfac{2q e^{i\vartheta} t,2q e^{-i\vartheta} t, At, q/At}{\infty}}{\qrfac{-4bct/\lambda, B t, q/B t}{\infty}}
.
\end{equation} 

It remains to find $t_1$ and $t_2$.  If we take 
$$t_1= \frac{1}{2}e^{-i\vartheta}, t_2= \frac{1}{2}e^{i\vartheta}$$
we will find that 
$$f(t_1/q)=0=f(t_2/q).$$ 

In this manner, we obtain an expression for a solution of the recurrence relation \eqref{mike-p-3term2} in the form \eqref{pk-def}:
\begin{equation}\label{gk-qint}
g_k(x)= \int_{\frac{1}{2}e^{-i\vartheta}}^{\frac{1}{2}e^{i\vartheta}} t^k 
\frac{\qrfac{2q e^{i\vartheta} t,2q e^{-i\vartheta} t, At, q/At}{\infty}}{\qrfac{-4bct/\lambda, B t, q/B t}{\infty}}
d_q t.
\end{equation}
We will specify $A$ and $B$ shortly. 

Using the definition of the $q$-integral, and some elementary algebraic manipulations,
we  obtain another expression for $g_k(x)$: 
\begin{align*}
g_k(x) &=
(1-q)
\frac{e^{i(k+1)\vartheta}}{2^{k+1}} 
\frac
{\qrfac{q, q e^{2i\vartheta}, \frac{A}{2}e^{i\vartheta}, \frac{2q}{A}e^{-i\vartheta}}{\infty}}
{\qrfac{ -\frac{2bc}{\lambda}e^{i\vartheta}, \frac{B}{2}e^{i\vartheta}, \frac{2q}{B}e^{-i\vartheta}}{\infty}}\cr
&\hspace{20pt}
\times 
\sum_{n=0}^\infty
\frac
{\qrfac{-\frac{2bc}{\lambda}e^{i\vartheta} }{n}}
{\qrfac{q, q e^{2i\vartheta}}n}
\left(\frac{-\lambda q^{k+1}}{b}\right) ^n\cr
& \hspace{50 pt} -
\text{ (same term with $\vartheta\mapsto -\vartheta$)}.
\end{align*}
Now using the $_r\phi_s$ notation, and 
collecting common terms, 
we can write this as
\begin{align}\label{pk-sum}
g_k(x) &=
(1-q)
\frac{e^{i(k+1)\vartheta}}{2^{k+1}} 
\frac
{\qrfac{q, q e^{2i\vartheta}, \frac{A}{2}e^{i\vartheta}, \frac{2q}{A}e^{-i\vartheta}}{\infty}}
{\qrfac{ -\frac{2bc}{\lambda}e^{i\vartheta}, \frac{B}{2}e^{i\vartheta}, \frac{2q}{B}e^{-i\vartheta}}{\infty}}\cr
&\hspace{20pt}
\times 
\Bigg( \ \!
 _{2}\phi_1
\left[ \begin{matrix} 
 -\frac{2bc}{\lambda}e^{i\vartheta},  0
\\
q e^{2i\vartheta} 
\end{matrix}
;q, -\frac{\lambda q^{k+1}}{b}
\right]
 \cr
&\hspace{20pt} -
e^{-2i(k+1)\vartheta}
\frac
{\qrfac{q e^{-2i\vartheta},  -\frac{2bc}{\lambda}e^{i\vartheta}, \frac{A}{2}e^{-i\vartheta}, \frac{2q}{A}e^{i\vartheta},
 \frac{B}{2}e^{i\vartheta}, \frac{2q}{B}e^{-i\vartheta}}{\infty}}
{\qrfac{q e^{2i\vartheta},  -\frac{2bc}{\lambda}e^{-i\vartheta}, \frac{A}{2}e^{i\vartheta}, \frac{2q}{A}e^{-i\vartheta},
 \frac{B}{2}e^{-i\vartheta}, \frac{2q}{B}e^{i\vartheta}}{\infty}}
\cr
&\hspace{30pt}
\cdot
 \ \!
 _{2}\phi_1
\left[ \begin{matrix} 
 -\frac{2bc}{\lambda}e^{-i\vartheta},  0
\\
q e^{-2i\vartheta}
\end{matrix}
;q, -\frac{\lambda q^{k+1}}{b}
\right]
\Bigg).
\end{align}
Next, we wish to examine whether the term in the brackets can be simplified by using a transformation formula. Indeed, on scanning the list of transformations in Gasper and Rahman, one finds in
\cite[Eq.~(III.31)]{GR90} a promising candidate. We take $a\mapsto -2bce^{i\vartheta}/\lambda$, 
$b\to 0$, $c\mapsto qe^{2i\vartheta}$ and $z\mapsto -\lambda q^{k+1}/b$ in this transformation formula to obtain:
\begin{align}\label{III.31-special}
\ \!
 _{2}\phi_1 &
\left[ \begin{matrix} 
 -\frac{2bc}{\lambda}e^{i\vartheta},  0
\\
q e^{2i\vartheta}
\end{matrix}
;q, -\frac{\lambda q^{k+1}}{b}
\right]
 \cr
& 
-
e^{-2i(k+1)\vartheta}
\frac
{\qrfac{q e^{-2i\vartheta},  -\frac{\lambda q}{2bc}e^{i\vartheta}, 2ce^{i\vartheta}, \frac{q}{2c}e^{-i\vartheta}
}{\infty}}
{\qrfac{q e^{2i\vartheta},  -\frac{\lambda q}{2bc}e^{-i\vartheta}, 2ce^{-i\vartheta}, \frac{q}{2c}e^{i\vartheta}
}{\infty}}
 \ \!
 _{2}\phi_1
\left[ \begin{matrix} 
 -\frac{2bc}{\lambda}e^{-i\vartheta},  0
\\
q e^{-2i\vartheta}
\end{matrix}
;q, -\frac{\lambda q^{k+1}}{b}
\right]
\cr
&\hspace{30pt} =
\frac
{\qrfac{e^{-2i\vartheta} }{\infty}}
{\qrfac{ -\frac{\lambda q}{2bc}e^{-i\vartheta}, 2cq^k e^{-i\vartheta}}{\infty}}
 \ \!
 _{1}\phi_1
\left[ \begin{matrix} 
 -{\lambda q}e^{i\vartheta}/{2bc}
\\
q^{1-k}e^{i\vartheta}
/{2c} 
\end{matrix}
;q, \frac{q^{1-k}e^{-i\vartheta}}{2c}
\right].
\end{align}
Now, comparing \eqref{III.31-special} and the two terms inside the bracket in \eqref{pk-sum}, we see that we should choose $B=4c$ and thus, since $A/B=-b/\lambda$, we must choose $A=-4bc/\lambda$.  

Next, we obtain \eqref{final-pka}. First we assume that $\Im(x)\ge 0$ or $\Im (\vartheta) \le 0$, so that 
$|e^{-i\vartheta}|\le 1$. 
Applying \eqref{III.31-special}, we find that \eqref{pk-sum} reduces to
\begin{align*}\label{pk-sum2}
g_k(x) &=
(1-q)
\frac{e^{i(k+1)\vartheta}}{2^{k+1}} 
\frac
{\qrfac{q, q e^{2i\vartheta}, e^{-2i\vartheta}}{\infty}}
{\qrfac{2ce^{i\vartheta}, \frac{q}{2c}e^{-i\vartheta}, 2cq^ke^{-i\vartheta}}{\infty}}\cr
&\hspace{20pt}
\times 
 \ \!
 _{1}\phi_1
\left[ \begin{matrix} 
 -{\lambda q}e^{i\vartheta}/{2bc}
\\
q^{1-k} e^{i\vartheta}/2c
\end{matrix}
;q, \frac{q^{1-k}e^{-i\vartheta}}{2c}
\right]
 .
\end{align*}
We can rewrite the $_1\phi_1$ on the right hand side using a special case of the transformation formula as a $_2\phi_1$ sum. The transformation we use is \cite[Eq.\ (III.4)]{GR90}:
\begin{equation}\label{III.4}
 \ \!
 _{2}\phi_1
\left[ \begin{matrix} 
 a,  b
\\
c
\end{matrix}
;q,z
\right]
=
\frac
{\qrfac{az}{\infty}}
{\qrfac{z}{\infty}}
\ \!
 _{2}\phi_2
\left[ \begin{matrix} 
 a,  c/b
\\
c, az 
\end{matrix}
;q, bz
\right]
.
\end{equation}
We use the $a\mapsto 0$, $b\mapsto -bq^{-k}/\lambda$, $c\mapsto q^{1-k} e^{i\vartheta}/2c$,
$z\mapsto -\lambda qe^{-i\vartheta}/2bc$ case of \eqref{III.4} and some elementary computations to write our solution of 
\eqref{mike-p-3term2} as follows. 
\begin{align*}
g_k(x) &=
(1-q) \frac{e^{ik\vartheta}\qrfac{2ce^{-i\vartheta}}{k}}{2^{k+2} (-i\sin\vartheta)}
\frac
{\qrfac{q, e^{2i\vartheta},  e^{-2i\vartheta},  -\frac{\lambda q}{2bc }e^{-i\vartheta}}{\infty}}
{\qrfac{ 2c e^{i\vartheta}, 2c e^{-i\vartheta},  \frac{q}{2c}e^{-i\vartheta}}{\infty}}\cr
&\hspace{20pt}
\times 
 \ \!
 _{2}\phi_1
\left[ \begin{matrix} 
 -{b}q^{-k}/{\lambda},  0
\\
q^{1-k} e^{i\vartheta}/2c
\end{matrix}
;q, -\frac{\lambda q}{2bc}e^{-i\vartheta}
\right].
\end{align*}
Finally, we divide through by some of the factors that do not depend on $k$, and obtain the solution 
$p_k(x)$ given in
 \eqref{final-pka}. Dividing \eqref{gk-qint} by these same factors, and inserting the values of $A$ and $B$, we obtain the  $q$-integral representation \eqref{final-pk-qint}. 
 
 To obtain \eqref{final-pkb}, we consider the case $\Im (x) \le 0$, replace $\vartheta$ by $-\vartheta$, and apply the transformations as above. Alternatively, we use a Heine 
 transformation \cite[Eq.\ (III.2)]{GR90}. 
 This completes the proof. 
\end{proof}

\section{The special case when $b=0$}\label{sec:specialcases}

In this section we consider the special case $b=0$ of \eqref{mikecf-3term}. Observe that other special cases which lead to Ramanujan's continued fractions (when $a=0$ or $\lambda =0$) can be treated as special cases of our work earlier in this paper. But when $b=0$, 
Blumenthal's theorem tells us that the measure has no absolutely continuous component, and is purely discrete. Thus this case has to be considered separately. 

When $b=0$, the continued fraction is 
 \begin{equation}
R(x)= 
\frac{1}{x+a}\fplus\frac{\lambda q}{x+aq}\fplus 
\frac{\lambda q^2}{x+aq^2}\fplus
\fdots  . \label{ram-jfrac}
\end{equation}
When $x=1$, it reduces to Ramanujan's continued fraction, given by the $b=0$ case of \eqref{mikecf}.
The corresponding three-term recurrence relation is 
\begin{equation}\label{ram-3term}
y_{k+1}(x) = (x+ aq^k) y_k(x) + \lambda q^{k} y_{k-1}(x), \text{ for } k > 0.
\end{equation}
By Proposition~\ref{cf-conv}, the numerator and denominator polynomials (denoted by $Q^*_k(x)$ and $Q_k(x)$, respectively) satisfy \eqref{ram-3term} and the initial values 
$$Q_0(x)=1, Q_1(x)=x+a; \; Q^*_0(x)=0, Q^*_1(x)= 1.$$ 
We require $0<|q|<1$ (with $q$ real),  $a\in\mathbb{R}$, and  $\lambda < 0$ to apply
Proposition~\ref{spectral}. 

Previously, Al--Salam and Ismail \cite{Al-I1983} had considered a very similar recurrence relation
\begin{equation*}\label{ai-3term}
U_{k+1} = x(1+ aq^k) U_k - \lambda q^{k-1} U_{k-1}, \text{ for } k > 0,
\end{equation*}
with $U_0=1$, $U_1=x(1+a)$. 

We denote the generating function of $Q_n(x)$  by $Q(t)$ and of $Q^*_n(x)$ by $Q^*(t)$. 
The generating functions are as follows. 
\begin{gather*} 
Q(t) = \sum_{k=0}^{\infty} \frac{\qrfac{-\lambda tq /a}{k} }{\qrfac{x t }{k+1}}
(at)^{k}q^{k\choose 2}, 
\cr
\intertext{and}
Q^*(t) = t\sum_{k=0}^{\infty} \frac{\qrfac{-\lambda qt /a}{k} }{\qrfac{xt }{k+1}}
(at)^{k}q^{\binom{k}{2}+k}. 
\end{gather*}

To obtain explicit expressions of the numerator and denominator polynomials, we need to extract the coefficient of powers of $t$.  
We need the $q$-binomial theorem in the form \cite[Ex. 1.2(vi)]{GR90}
\begin{equation}\label{gr90-ex-1.2vi}
\qrfac{at}{k}=\sum_{j\geq 0} \qbin{k}{j}_q (-1)^j q^{j\choose 2} (at)^j.
\end{equation}
In addition, we require the following special case of the $q$-binomial theorem (cf.~\cite[Eq.\ 1.3.2]{GR90})
valid for $|at|<1$:
\begin{equation}\label{q-bin-special-1} 
\frac{1}{\qrfac{at}{k+1}} =\sum_{m=0}^{\infty} \qbin{m+k}{k}
(at)^m.
\end{equation}

Using these, we find that $Q(t)$ can be written as
$$Q(t)=\sum_{j, k, m\geq 0} \qbin{k}{j}_{q} \qbin{k+m}{k}_{q}
a^{k-j} x^m \lambda^j  q^{{k\choose 2}+{j\choose 2}+j}t^{j+k+m}.$$
From here, we take the coefficient of $t^n$ to obtain an expression for $Q_n(x)$. We see that
\begin{align}\label{ram-den}
Q_n(x) &= 
\sum_{j, k \geq 0} \qbin{k}{j}_{q} \qbin{n-j}{k}_{q}
a^{k-j} x^{n-j-k} \lambda^j  q^{{k\choose 2}+{j\choose 2}+j} \cr
&= 
\sum_{j\geq 0} 
\qbin{n-j}{j}_q
\frac{\qrfac{-a/x}{n-j}}
{\qrfac{-a/x}{j}}
\lambda^j  x^{n-2j}  q^{j^2},
\end{align}
where we obtain the last equality by summing the inner sum using \eqref{gr90-ex-1.2vi}. Note that the first of these sums expresses $Q_n(x)$ as a polynomial in $x$ of degree $n$, since the indices satisfy 
$k+j\leq n$. 

Similarly, $Q_n^*(x)$ can be written as
\begin{align}\label{ram-n}
Q_n^*(x) &= 
\sum_{j\geq 0} 
\qbin{n-j-1}{j}_q
\frac{\qrfac{-a/x}{n-j}}
{\qrfac{-a/x}{j+1}}
\lambda^j  x^{n-2j-1}  q^{j^2+j}.
\end{align}

From Proposition \ref{spectral} and the comments on Blumenthal's theorem, we have the following orthogonality relation. 
\begin{Theorem}
Suppose  $q$ is real with $0<q<1$, $a\in \mathbb{R}$, and $\lambda<0$. Let $Q_n(x)$ be given 
by \eqref{ram-den}.
 Then we have the 
orthogonality relation
 $$\int_{-\infty}^{\infty} Q_n(x) Q_m(x) d\mu = (-\lambda)^n q^{\binom{n+1}{2}}\delta_{mn},$$
where $\mu$ is a purely discrete positive measure. 
\end{Theorem}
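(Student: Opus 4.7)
The plan is to apply the Spectral Theorem (Proposition~\ref{spectral}) together with Blumenthal's Theorem (stated among the pertinent facts in Section~\ref{sec:mikecf1})---the same two-step template as in earlier sections, but now the coefficients tend to zero rather than to strictly positive limits, which is precisely what forces a discrete measure. First I would put the recurrence \eqref{ram-3term} into the standard form \eqref{three-term2}. Comparing $y_{k+1} = (x + aq^k)y_k + \lambda q^k y_{k-1}$ with $xy_k = y_{k+1} + \alpha_k y_k + \beta_k y_{k-1}$, I read off $\alpha_k = -aq^k$ and $\beta_k = -\lambda q^k$; the initial conditions $Q_0(x)=1$, $Q_1(x) = x+a = x-\alpha_0$ are exactly the ones demanded by Proposition~\ref{spectral}.

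Next I would check the positivity hypothesis. Since $a$ and $q$ are real, each $\alpha_k$ is real; since $\lambda<0$ and $q>0$, each $\beta_k = -\lambda q^k$ is strictly positive. Proposition~\ref{spectral} then produces a positive measure $\mu$ on $\mathbb{R}$ satisfying
$$\int_{-\infty}^{\infty} Q_n(x)Q_m(x)\,d\mu(x) = \beta_1\beta_2\cdots\beta_n\,\delta_{mn},$$
and the product telescopes as
$$\beta_1\beta_2\cdots\beta_n = \prod_{k=1}^{n}(-\lambda q^k) = (-\lambda)^n q^{1+2+\cdots+n} = (-\lambda)^n q^{\binom{n+1}{2}},$$
which is exactly the normalization claimed.

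To obtain the discreteness of $\mu$, I would invoke Blumenthal's Theorem. Because $|q|<1$, both $\alpha_k = -aq^k \to 0$ and $\beta_k = -\lambda q^k \to 0$ as $k\to\infty$. This places us in the first branch of the Blumenthal dichotomy stated in Section~\ref{sec:mikecf1}, with limit $\alpha = 0$: the orthogonality measure is then purely discrete. Combined with the previous paragraph, this completes the proof.

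There is essentially no deep step here; in contrast to Sections~\ref{sec:Mike-Nevai} and~\ref{sec:stieltjes}, no Nevai theorem, generating function, Darboux comparison, or Stieltjes inversion is required. The only delicate point worth noting is the sign of $\beta_k$: the statement $0<|q|<1$ should really be read as $0<q<1$, because for $q<0$ the quantity $\beta_k = -\lambda q^k$ alternates in sign and the Spectral Theorem in its present form would not apply. Modulo that observation, the orthogonality relation and the discreteness of $\mu$ are both immediate once the recurrence has been placed in canonical form.
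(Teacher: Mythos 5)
Your proposal is correct and follows essentially the same route as the paper, which likewise obtains the orthogonality relation directly from Proposition~\ref{spectral} (reading off $\alpha_k=-aq^k$, $\beta_k=-\lambda q^k$, so that $\beta_1\cdots\beta_n=(-\lambda)^n q^{\binom{n+1}{2}}$) and appeals to Blumenthal's theorem for the pure discreteness of $\mu$. Your side remark that the hypothesis should effectively be $0<q<1$ (since $\beta_k>0$ fails for negative $q$) is a fair observation and does not change the argument.
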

%

Next we find asymptotic formulas for the denominator and numerator polynomials, from the formulas for $Q_n(x)$ and $Q_n^*(x)$  above. 
We find that, for a fixed $x$, as $n\to\infty$,
\begin{gather*}
Q_n(x) \sim  
x^n \qrfac{-a/x}{\infty}
\ \!
 _{0}\phi_1
\left[ \begin{matrix} 
-
\\
-a/x
\end{matrix}
;q, \frac{\lambda q}{x^2}
\right]\cr
\intertext{and}
Q_n^*(x) \sim  
x^{n-1} \qrfac{-aq/x}{\infty}
\ \!
 _{0}\phi_1
\left[ \begin{matrix} 
-
\\
-aq/x
\end{matrix}
;q, \frac{\lambda q^2}{x^2}
\right].
\end{gather*}

Thus, the Stieltjes transform of $\mu$ is given by
$$
\int_{-\infty}^{\infty} \frac{d\mu(t)}{x-t} = \frac{1}{(x+a)} 
\frac{  
_{0}\phi_1
\left[ \begin{matrix} 
-
\\
-aq/x
\end{matrix}
;q, \displaystyle \frac{\lambda q^2}{x^2}
\right]
}{
 _{0}\phi_1
\left[ \begin{matrix} 
-
\\
-a/x
\end{matrix}
;q, \displaystyle \frac{\lambda q}{x^2}
\right]
},
$$
for $x\not\in \operatorname{supp}{\mu}$. 

\section{Formulas for the convergents}\label{sec:convergents}

In this section, we show how to obtain formulas for the convergents analogous to Ramanujan's Entry 16, which was highlighted in the introduction. We derive a formula given by Hirschhorn \cite{MDH1974}, and then take special cases corresponding to two of Ramanujan's continued fractions. We have recast Hirschhorn's original approach in terms of Proposition~\ref{cf-conv} in order to make it transparent how such formulas can be found. For some further examples, see Bowman, Mc Laughlin and Wyshinski~ \cite{BMW2006}.

We will require the notation of the  {\em $q$-multinomial coefficients}, defined as
$$\qbin{n}{k_1,k_2,\dots, k_r}_q = \frac{\qrfac{q}{n}}
{\qrfac{q}{k_1}\qrfac{q}{k_2}\cdots \qrfac{q}{k_r}\qrfac{q}{n-(k_1+k_2+\cdots+k_r)}}$$
where $n, k_1, k_2, \dots, k_r$ are positive integers and $n\geq k_1+k_2+\cdots + k_r$. 
When $n< k_1+k_2+\cdots +k_r$, we take the $q$-multinomial coefficient to be $0.$ When $r=1$, then these reduce to the $q$-binomial coefficients. 

We first consider \eqref{mikecf}.
%
Denote by $Y(t)$, $D(t)$ and $N(t)$ the generating functions of $y_k(x)$, $D_k(x)$ and $N_k(x)$ respectively. 
Multiply \eqref{mikecf-3term} by $t^{k+1}$ and sum over $k\geq 0$ to find that
$$(1-x(1-b)t-bt^2)Y(t) = y_0+ty_1 -xt(1-b)y_0-aty_0 +at(1+\lambda qt/a)Y(tq),$$
where we have used $y_0=y_0(x)$ and $y_1=y_1(x)$ to denote the initial values of $y_k(x)$.
Thus, the generating function of $D_n(x)$ satisfies the $q$-difference equation
$$D(t) = \frac{1}{1-x(1-b)t-bt^2} + \frac{at(1+\lambda tq /a)}{1-x(1-b)t-bt^2} D(tq).$$
Let $\alpha$ and $\beta$ be such that 
\begin{equation}\label{alpha-beta-mikecf}
1-(1-b)xt-bt^2 = (1-\alpha t) (1-\beta t).
\end{equation}
Using $\alpha$ and $\beta$ we can write the $q$-difference equation for $D(t)$ in a form that it can be iterated easily. As before, we obtain the generating function
\begin{equation*}
D(t) = \sum_{k=0}^{\infty} \frac{\qrfac{-\lambda tq /a}{k} }{\qrfac{\alpha t, \beta t }{k+1}}
(at)^{k}q^{k\choose 2}.\label{mikecf-denom-gf}
\end{equation*}
Similarly, we obtain the generating function of the numerators
\begin{equation*}
N(t)=t(1-b)\sum_{k=0}^{\infty} \frac{\qrfac{-\lambda tq /a}{k} }{\qrfac{\alpha t, \beta t }{k+1}}
(at)^{k}q^{{k\choose 2}+k}.
\end{equation*}
Notice that the $x$ is hidden implicitly in $\alpha$ and $\beta$. 

To obtain explicit formulas for the convergents, we need to find expressions for $N_n(x)$ and $D_n(x)$ when $x=1$.
Note that when $x=1$ in
\eqref{alpha-beta-mikecf}, then $\alpha =1$ and $\beta = -b$.

We use \eqref{gr90-ex-1.2vi} and \eqref{q-bin-special-1} 
to find that $D(t)$ with $\alpha =1$, $\beta=-b$ becomes
$$D(t)=\sum_{j, k,l, m\geq 0} \qbin{k}{j}_{q} \qbin{k+l}{k}_{q} \qbin{k+m}{k}_{q}
a^{k-j}(-b)^l \lambda^jq^{{k\choose 2}+{j\choose 2}+j}t^{j+k+l+m}.$$
We now take the coefficient of $t^n$ (so restrict the sum to $n=j+k+l+m$) to find that
\begin{align*}
D_n(1)&=\sum_{j, k,l \geq 0} \qbin{k}{j}_{q} \qbin{k+l}{k}_{q} \qbin{n-j-l}{k}_{q}
a^{k-j}(-b)^l \lambda^jq^{{k\choose 2}+{j\choose 2}+j} \cr
&= \sum_{j, k,l \geq 0} \qbin{k+l}{j,l}_{q} \qbin{n-j-l}{k}_{q}
a^{k-j}(-b)^l \lambda^jq^{{k\choose 2}+{j\choose 2}+j} .
\end{align*}
Similarly, we find that
\begin{equation*}
N_n(1) =(1-b) \sum_{j, k,l \geq 0} \qbin{k+l}{j,l}_{q} \qbin{n-j-l-1}{k}_{q}
a^{k-j}(-b)^l \lambda^jq^{{k\choose 2}+k+{j\choose 2}+j}.
\end{equation*}
We divide $N_{n+1}(1)$ by $(1-b)D_{n+1}(1)$ to obtain Hirschhorn's formula \cite{MDH1974}:
\begin{equation}
\frac{N_{n+1}(1)}{(1-b)D_{n+1}(1)} =
\frac{1}{1-b+a}\fplus\frac{b+\lambda q}{1-b+aq}\fplus 
\fdots \fplus \frac{b+\lambda q^n}{1-b+aq^n}. \label{mikecf-jfrac-conv}
\end{equation}
Taking $n\to\infty$ and invoking the two summations \eqref{gr90-ex-1.2vi} and \eqref{q-bin-special-1} we obtain Hirschhorn's formula for his infinite continued fraction as a ratio of two sums, under the condition $|b|<1$.

From \eqref{mikecf-jfrac-conv} we can take special cases $b=0$, $a=0$ or both to obtain results related to Ramanujan's continued fractions.
The first special case we consider is from the lost notebook \cite[Entry 6.3.1(iii)]{AB2005}
\begin{equation*}
\frac{1}{1-b}\fplus\frac{b+\lambda q}{1-b}\fplus\frac{b+\lambda q^2}{1-b}\fplus\frac{b+\lambda q^3}{1-b}\fplus\fdots.
\end{equation*}
This is obtained by taking $a=0$ in \eqref{mikecf}.
 Here is our formula for the convergents of \eqref{g-cfrac3}. We have,
\begin{equation}\label{g-cfrac3-n}
\frac{N^\prime_n}{D^\prime_n} =
\frac{1}{1-b}\fplus \frac{b+\lambda q}{1-b}\fplus
\frac{ b+\lambda q^2}{1-b}\fplus\frac{b+\lambda q^3}{1-b}
\fplus\fdots \fplus \frac{b+\lambda q^n}{1-b}
 ,
\end{equation}
where the numerator and denominator polynomials of the $(n+1)$th convergent are given by:
\begin{equation*}
N^\prime_n = 
\sum_{k, j\geq 0} q^{k^2+k}\lambda^k
\qbin{k+j}{k}_{q} \qbin{n-k-j}{k}_{q} (-b)^j
\end{equation*}
and
\begin{equation*}
D^\prime_n = 
\sum_{k, j\geq 0} q^{k^2}\lambda^k 
\qbin{k+j}{k}_{q} \qbin{n-k-j+1}{k}_{q} (-b)^j
.
\end{equation*}

When $b=0$, this immediately reduces to  \eqref{entry16}, Ramanujan's Entry 16.
Upon taking $n\to\infty$, we obtain Ramanujan's continued fraction evaluation, given in
Andrews and Berndt \cite[Entry 6.2.1(iii)]{AB2005}. 
We define
\begin{align*}
g(b,\lambda)&:=\sum_{k=0}^{\infty}\frac{ \lambda^k q^{k^2}}{\qrfac{q}{k}\qrfac{-bq}{k}}. 
\end{align*}
Then
 for $|b|<1$,
\begin{equation}
\frac{g(b,\lambda q)}{g(b,\lambda)}
=\frac{1}{1-b}\fplus\frac{b+\lambda q}{1-b}\fplus\frac{b+\lambda q^2}{1-b}\fplus\frac{b+\lambda q^3}{1-b}\fplus\fdots.\label{g-cfrac3}
\end{equation}

The condition $|b|<1$ appears quite naturally as a requirement for the sum to be convergent. 
To see this, consider the limit
\begin{align*}
\lim_{n\to\infty} N^\prime_n &= \lim_{n\to\infty} 
\sum_{k, j\geq 0} q^{k^2+k}\lambda^k \frac{\qrfac{q}{k+j} \qrfac{q}{n-k-j}}
{\qrfac{q}{k}\qrfac{q}{j}  \qrfac{q}{n-2k-j} \qrfac{q}{k}}
 (-b)^j\cr
 &= \sum_{k\geq 0}  \frac{q^{k^2+k}\lambda^k}
{ \qrfac{q}{k}}
\sum_{j\geq 0} 
\frac{\qrfac{q}{k+j} }
{\qrfac{q}{j}\qrfac{q}{k} }
 (-b)^j\cr
 &= \sum_{k\geq 0}  \frac{q^{k^2+k}\lambda^k}
{ \qrfac{q}{k}\qrfac{-b}{k+1}} ,
\end{align*}
upon invoking \eqref{q-bin-special-1}, assuming $|b|<1$. This shows that
$$ \lim_{n\to\infty} N^\prime_n =
\frac{g(b,\lambda q)}{1+b}.$$
Similarly, we can see that
$$ \lim_{n\to\infty} D^\prime_n =
\frac{g(b,\lambda )}{1+b}, $$
and this completes a proof of \eqref{g-cfrac3}.

Next we take $b=0$ in \eqref{mikecf}.
Ramanujan found the continued fraction (see Entry 15 of \cite[ch.\ 16 ]{Berndt1991-RN3} or \cite[Entry 6.3.1(ii)]{AB2005})
\begin{align*}
\frac{g(a,\lambda )}{g(a,\lambda q)}
&=
1+\frac{\lambda q}{1+aq}\fplus\frac{\lambda q^2}{1+aq^2}\fplus\frac{\lambda q^3}{1+aq^3}\fplus\fdots .
\end{align*} 
A formula for the convergents of Ramanujan's Entry 15 is as follows. Let
\begin{gather*}
\widehat{N}_n = 
\sum_{j\geq 0} q^{j^2}\lambda^j 
\qbin{n+1-j}{j}_{q} 
\frac{\qrfac{-aq}{n-j}}{\qrfac{-a}{j}} \cr
\intertext{and}
\widehat{D}_n = 
\sum_{j\geq 0} q^{j^2+j}\lambda^j
\qbin{n-j}{j}_{q}
\frac{\qrfac{-aq}{n-j}}{\qrfac{-aq}{j}}.
\end{gather*}
Then, for $n=1, 2, 3, \dots$, we have
\begin{equation}
(1+a)\frac{\widehat{N}_n}{\widehat{D}_n} =
1+a + \frac{\lambda q}{1+aq}\fplus
\frac{\lambda q^2}{1+aq^2}\fplus\frac{\lambda q^3}{1+aq^3}
\fplus\fdots \fplus \frac{\lambda q^n}{1+aq^n}
 .\label{entry16-gen1-a}
\end{equation}
%
To obtain \eqref{entry16-gen1-a}, we take $x=1$ in \eqref{ram-den} and \eqref{ram-n} and observe that
\begin{gather*}
Q_{n+1}(1)=(1+a)\widehat{N}_n\cr
\intertext{and}
Q_{n+1}^*(1)=\widehat{D}_n.
\end{gather*}

When $a=0$, \eqref{entry16-gen1-a} reduces to Ramanujan's Entry 16 given in \eqref{entry16}. Formula 
\eqref{entry16-gen1-a} is implicit in Al-Salam and Ismail's study \cite{Al-I1983} of the orthogonal polynomials associated with Rogers--Ramanujan continued fraction.  Bhatnagar and Hirschhorn \cite{BH2016} wrote it in this form and gave an elementary proof following Euler's approach given in 
\cite{GB2014}. 

Formulas  \eqref{entry16-gen1-a} and \eqref{g-cfrac3-n} are generalizations of Ramanujan's Entry 16, corresponding to two extensions of the Rogers--Ramanujan continued fraction given by Ramanujan in the Lost Notebook, recorded as Entry 6.3.1(ii) and (iii), respectively in 
\cite{AB2005}. As we have seen, such formulas can be discovered quite easily using generating functions.

\subsection*{Acknowledgments} This work was done at the sidelines of many workshops, conferences and summer schools organized by the members of the Orthogonal Polynomials and Special Functions (OPSF) group of SIAM. We thank the organizers of the following:  OPSF summer school, (July 2016), University of  Maryland; the international conference on special functions: theory, computation and applications, (June 2018), Liu Bie Ju center for mathematical sciences, City University of Hong Kong, Hong Kong; and,
summer research institute on $q$-series, (July-Aug 2018), Chern Institute of Mathematics, Nankai University, Tianjin, PR China. Finally, we wish to thank the anonymous referees for several useful suggestions and corrections.

Research of the first named author was supported by grants of the Austrian Science Fund (FWF): START grant Y463 and FWF  grant F50-N15.

\subsection*{Data availability}
Data sharing not applicable to this article as no datasets were generated or analyzed during the current study.

%
%

\end{document}